\let\cite\citep
\def\red#1 {\textcolor{red}{#1 }}
\def\blue#1 {\textcolor{blue}{#1 }}
\def\brown#1 {\textcolor{brown}{#1 }}
\newcommand{\inv}{\mathcal I} 
\newcommand{\n}{\mathbf{n} }
\providecommand{\keywords}[1]{\textbf{\textit{Keywords:}} #1}
\providecommand{\subclass}[1]{\textbf{\textit{AMS subclass classification:}} #1}
\def\Z{\mathbb Z}
\def\R{\mathbb R}
\def\s{\sigma}
\newtheorem{theorem}{Theorem}[section]
\newtheorem{proposition}[theorem]{Proposition}
\newtheorem{lemma}[theorem]{Lemma}
\newtheorem{corollary}[theorem]{Corollary}
\theoremstyle{definition}
\newtheorem{definition}[theorem]{Definition}
\def\Zn{\Z/n\Z}
\def\N{\mathbb N}
\def\PP{\mathrm P}
\def\D{\mathcal D}
\title{Group-theoretic models of the inversion process in bacterial genomes\thanks{This research was partially funded by Australian Research Council Discovery Grants DP0987302 and DP130100248, and Australian Research Council Future Fellowship FT100100898.}}
\author[1]{Attila Egri-Nagy}
\author[1]{Volker Gebhardt}
\author[2]{Mark M. Tanaka}
\author[1]{Andrew R. Francis}
\affil[1]{
Centre for Research in Mathematics,
University of Western Sydney,
Locked Bag 1797,
Penrith, NSW 2751,
Australia\\
\protect\url{a.egri-nagy,v.gebhardt,a.francis@uws.edu.au}}
\affil[2]{
Evolution \& Ecology Research Centre,
School of Biotechnology and Biomolecular Sciences,
University of New South Wales,
NSW   2052
Australia\\
\protect\url{m.tanaka@unsw.edu.au}}
\date{}
\begin{document}

\maketitle

\begin{abstract} 
The variation in genome arrangements among bacterial taxa is largely due to the process of inversion.
Recent studies indicate that not all inversions are equally probable, suggesting, for instance, that shorter inversions are more frequent than longer, and those that move the terminus of replication are less probable than those that do not.  Current methods for establishing the inversion distance between two bacterial genomes are unable to incorporate such information.  In this paper we suggest a group-theoretic framework that in principle can take these constraints into account.  
In particular, we show that by lifting the problem from circular permutations to the affine symmetric group, the inversion distance can be found in polynomial time for a model in which inversions are restricted to acting on two regions.  This requires the proof of new results in group theory, and suggests a vein of new 
combinatorial problems concerning permutation groups
on which group theorists will be needed to collaborate with biologists.   We apply the new method to inferring distances and phylogenies for published \emph{Yersinia pestis} data.

\end{abstract}

\keywords{bacterial genomics, group theory,  hyperoctahedral group,  inversion distance,  phylogeny}

\subclass{20B35,  20B40,  92D15,  68R05}

\tableofcontents

\section{Introduction}

Significant variation in the position of genes in bacterial genomes is observed across individuals even within species~\cite{kawai2006genomes}. This variation is largely due to the process of inversion, which involves successive reversals of segments of the circular chromosome~\cite{eisen2000evidence}.
As the inversion process cannot be directly observed, mathematical models are needed to draw inferences from genomic data about these evolutionary processes. 
Such models can also be used to put a metric on the space of genome arrangements, that is, to establish estimates of evolutionary distance between genomes.
These distances in turn can be used to reconstruct phylogenetic trees using distance-based methods such as Neighbour-Joining~\cite{Saitou1987neighbour}. In the context of bacterial phylogenetics, inversions are particularly valuable for this purpose because, unlike comparisons via single nucleotide polymorphisms (SNPs), their signal is not affected by horizontal transfer events~\cite{Darling2008}.

Several algorithms have been successfully developed to obtain the minimal number of inversions (or other similar operations) required to transform one genome into another.  For instance, the breakpoint graph of~\citet{Bafna1993genome} and its successors (for example~\citet{hannenhalli1995transforming}) address the inversion distance problem very effectively under a model in which all inversions are equally probable.  There have even been first attempts to put this approach into an algebraic context~\cite{meidanis2000alternative}.
An alternative approach has been to define a wider set of operations than just inversions, which has led to an elegant general method called Double-Cut-and-Join (DCJ) for identifying the minimal distance between a pair of genomes~\cite{bergeron2006unifying}, again under the model in which all events are equally probable.

Because the processes involved in large-scale evolution on the bacterial chromosome are complicated and only partially understood, any algorithm to establish distance will have to make a range of simplifying assumptions about the processes in order to make any progress.  For instance, the basic approach of using the minimal number of inversions to estimate distance already implies that the rate of evolution is slow or that short evolutionary times are involved (which is justified in some cases~\cite{rocha2006inference}). Existing methods make further convenient assumptions about the inversion process itself, including the powerful assumption already mentioned that all inversions on a chromosome are equally probable.  An immediate consequence of this assumption is that inversions on a circular chromosome can be studied just as well on a linear chromosome, because any inversion on the circle is equivalent to another (equally probable) inversion of the complementary region, and hence we may ``cut" our representation of the circular permutation at some point and develop algorithms as though it were a linear chromosome.

Evidence has emerged recently that inversions are not all equally probable, and in particular that there are some restrictions on the length of an inverted region and on its location on the chromosome.  Specifically, on the one hand some studies have suggested that shorter inverted segments evolve more frequently~\cite{Dalevi2002measuring,Lefebvre2003detection,Darling2008}, and on the other it has been observed that the terminus of replication is always close to the antipode of the origin of replication~\cite{Darling2008,eisen2000evidence}.  The latter observation suggests that there is a fitness cost to having the terminus far from the antipode, and hence that individual inversions themselves do not move the terminus far from this position.

Such additional information is difficult to incorporate into existing solutions to the inversion distance problem, and suggests that a new approach is needed.  
In particular we give a simple example (Section~\ref{s:example}) that shows cutting the circle to linearize the problem will fail to find the minimal number of inversions when the length of an inverted region is constrained.
More generally, in this paper we show how the inversion process may be modelled as a group acting on the set of all possible genome arrangements. 
Groups are powerful ways of encapsulating the symmetries of objects.  While they have been intensively studied over the last century within pure mathematics, they have found many applications in science, especially in theoretical physics and domains such as crystallography where symmetry plays an obvious role.

We first present a group-theoretic framework in which some of the information about the inversion process can be realised in the form of models based on groups (Section~\ref{s:group.inversions}).  In this way we are able to translate questions about inversion distance into questions about groups.   
In the main body of the paper (Section~\ref{s:two.reg.model}) we address in detail one model of the inversion process and develop a new method to establish the inversion distance in this case, via the proof of results in group theory. In this model we consider the circular inversions limited to two regions. The key theoretical idea in our approach is to lift the problem from the group of circular permutations to the \emph{affine symmetric group}, which can be viewed intuitively as unrolling the circle on the infinite number line.  This is described in Sections~\ref{sec:affine} and~\ref{sec:background.affine}.  The main theoretical results are given in Sections~\ref{sec:crossings} and~\ref{sec:min.length.rep}.  In particular, Theorem~\ref{thm:minimal.is.shortest} shows how a same-length representative of a circular permutation in the affine symmetric group may be found.  Length can easily be calculated in the affine symmetric group, and so this resolves the problem with cutting the circular genome, and provides a framework in which other such models can be studied. 
As an example, we apply this method to the reconstruction of the phylogenetic history of some published \emph{Yersinia} genomes (Section~\ref{s:applications}).

\section{Group theoretic inversion systems}\label{s:group.inversions}

In this section we introduce a general group-theoretic framework that covers a range of models of the chromosomal inversion process.

\subsection{Group-theoretic inversion systems}\label{s:systems}

The idea of considering genomes as permutations of a set of regions is not new, and underlies all combinatorial methods.  Several other studies also put at least this much in the language of group theory~\cite{meidanis2000alternative,fertin2009combinatorics,moulton2011butterfly}.

For instance, given a pair of genomes for which eight conserved regions have been identified, the regions can be numbered according to the arrangement of one of the genomes so that this ``reference" genome is represented by the sequence $[1,2,3,4,5,6,7,8]$.  An inversion of the segment between regions 4 and 7 (inclusive) is then either $[1,2,3,7,6,5,4,8]$ (unsigned), or $[1,2,3,-7,-6,-5,-4,8]$ (signed).  This notation is based on a common two-line notation for permutations of $\n=\{1,2,\dots,n\}$, in which the top line gives the elements of the domain $\n$ and the bottom line gives the images of each element of $\n$, so that 
\[[1,2,3,-7,-6,-5,-4,8]\text{ is shorthand for  } \begin{pmatrix}1&2&3&4&5&6&7&8\\  1&2&3&-7&-6&-5&-4&8\end{pmatrix}.\]

The set of all such permutations forms a {group}, 
with each genome considered to be a group element (the reference genome being the identity element of the group).  
The group of unsigned permutations of a set $\n$ is a subgroup of the group of signed permutations (the \emph{hyperoctahedral} group, or Coxeter group of type $B$).

In general, we define an \emph{inversion system} to be a tuple $(G,\inv)$ where $G$ is a permutation group and $\inv$ is a set of inversions such that $\langle \inv \rangle=G$, i.e. $\inv$ generates $G$. 
If we choose a particular genome arrangement to be the reference genome, each possible genome can be considered to be a group element with respect to this reference.  
For any pair of genome arrangements there is a unique group element that transforms one into the other.  For instance if genomes $G_1$ and $G_2$ are represented by group elements (permutations $g_1$ and $g_2$), then the group element transforming $G_1$ into $G_2$ is $g_1^{-1}g_2$ (we write our groups acting on the right).  This group element is \emph{independent} of the choice of reference genome. 
In any inversion system \emph{the inversion distance problem is equivalent to finding the minimum length of a group element as a word in the generators}.
Given the impossibility of searching the entire group when the number of regions is large (for example, genomes with 60 regions can correspond to groups of order $\sim 10^{100}$), it is necessary to exploit the algebraic structure.

If all inversions are allowed and we ignore orientation, then
we have the group of all permutations of $\n$, namely the symmetric group $S_n$, the generators are all possible inversions of intervals on the circle, and the metric is the word length, up to the action of the dihedral group (see Section~\ref{s:circ.perms} for more details of this action).  This is the model considered by~\citet{Watterson-chrom-reversal-1982}, and for which they obtained bounds on the distance.  A significant number of extensions and improvements have followed this path with great success~\cite{kececioglu1993exact,hannenhalli1995transforming,bader2001linear}.  The signed version of this model, in which regions are regarded as having orientation, gives rise to the hyperoctahedral group, and is the most widely studied model in the inversion distance literature.

If all inversions are equally probable and are on the circle, an inversion of one region is equivalent to the inversion of the \emph{complementary} region.  One consequence is that one may consider a select region to be fixed, and only consider inversions that do not move it.  This enables treatment of the problem as if it were a linear chromosome.  This is the basis for many efficient methods currently available, including the use of breakpoints~\cite{kececioglu1993exact} and methods using the breakpoint graph due to~\citet{Bafna1993genome}.

In this paper we will study a non-uniform model, in which we only permit inversions of two adjacent regions.  In any such model in which the length of an inversion is restricted, it is not always possible to find the minimum number of inversions by cutting the genome and treating it as if it were linear.  This is best seen with an example.

\subsection{Circular permutations with restricted inversion size: an illustrative example}\label{s:example}

Consider the circular genome with eight regions numbered $1, \dots, 8$, and arranged around the circle in the order $[1,6,3,8,5,2,7,4]$.
This amounts to an arrangement in which each even numbered region has moved forwards two spaces and each odd backwards two spaces.  This arrangement can be sorted by 8 inversions of adjacent regions.  For instance if we write $s_i$ for the swap $(i\ i+1)$, with regions numbered mod 8 (so that $s_8$ means swapping the adjacent regions 8 and 1), this arrangement can be returned to the identity via the action of $s_8s_6s_4s_2s_7s_5s_3s_1$:
\begin{align*}
[1,6,3,8,5,2,7,4]s_8s_6s_4s_2s_7s_5s_3s_1
&=[4,6,3,8,5,2,7,1]s_6s_4s_2s_7s_5s_3s_1\\
&=[4,6,3,8,5,7,2,1]s_4s_2s_7s_5s_3s_1\\
&=[4,6,3,5,8,7,2,1]s_2s_7s_5s_3s_1\\
&=[4,3,6,5,8,7,2,1]s_7s_5s_3s_1\\
&=[4,3,6,5,8,7,1,2]s_5s_3s_1\\
&=[4,3,6,5,7,8,1,2]s_3s_1\\
&=[4,3,5,6,7,8,1,2]s_1\\
&=[3,4,5,6,7,8,1,2]
\end{align*}
which is the identity arrangement (on the circle), as shown in Figure \ref{fig:motivatingexample}.  The minimum number of inversions required to sort this arrangement by cutting-linearizing is 9. 
\begin{figure}[ht!]
\begin{center}
\includegraphics{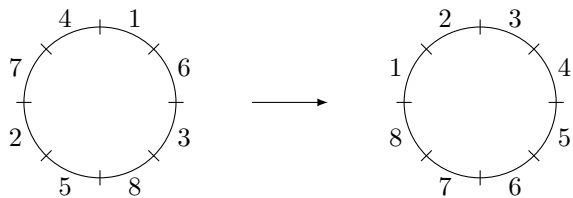}
\caption{An arrangement of eight regions whose length with 2-region inversions on the circle is 8 but when calculated by cutting and linearizing is 9.}
\label{fig:motivatingexample}
\end{center}
\end{figure}

\subsection{Circular permutations}\label{s:circ.perms}

The example above raises two important subtleties regarding circular permutations that do not affect linear rearrangements.  The first is that 
for regions on a circle numbered 1 to $n$, position  $n$ is adjacent to both position $n-1$ and position 1.   So, while an arrangement on the line with $n$ and $1$ swapped (that is $[n,2,3,\dots,n-1,1]$) requires many short inversions to sort, on a circle it requires just one 2-region inversion swapping $n$ and $1$.  This feature does not affect matters if all inversions are equally probable, because an inversion of the regions $2,\dots,n-1$ is just as likely as one of $n$ and 1, so the inversions across the boundary between $n$ and 1 can be ignored.  Of course, an inversion of the regions $2,\dots,n-1$ results in the regions apparently reversed in order: $[n,n-1,\dots,2,1]$.  However since our genome is in three dimensions this is equivalent to the arrangement $[1,2,\dots,n-1,n]$.  This brings to the fore the second subtlety.

Because the circular chromosome lives in three dimensions, we consider arrangements to be equivalent if they can be obtained from each other by rotating the circle or by turning it over (ignoring topological features such as knotting). This is because these actions on the circle simply correspond to different viewpoints from which to observe the chromosome.  The actions of rotating and reflecting a circle form a group (because they can be composed associatively and have inverse operations), and when we identify $n$ regularly positioned points on the circle the group is called the \emph{dihedral group}, denoted $\D_n$. The set of arrangements equivalent to $\s$ under the action of the dihedral group is the coset $\s\D_n$.  

On the one hand this means our search space --- the set of genuinely distinct arrangements --- is reduced by a factor of $2n$ (there are $2n$ elements of $\D_n$).  On the other hand, this means that to find the minimum number of inversions required to generate a given arrangement we need to consider each element of the coset, or in other words, each \emph{frame of reference}.  In what follows, our strategy is to develop a method to find the length of a permutation with respect to any given frame of reference, and then search over the $2n$ frames of reference for the shortest.

\begin{figure}[ht!]
\begin{center}
	\includegraphics{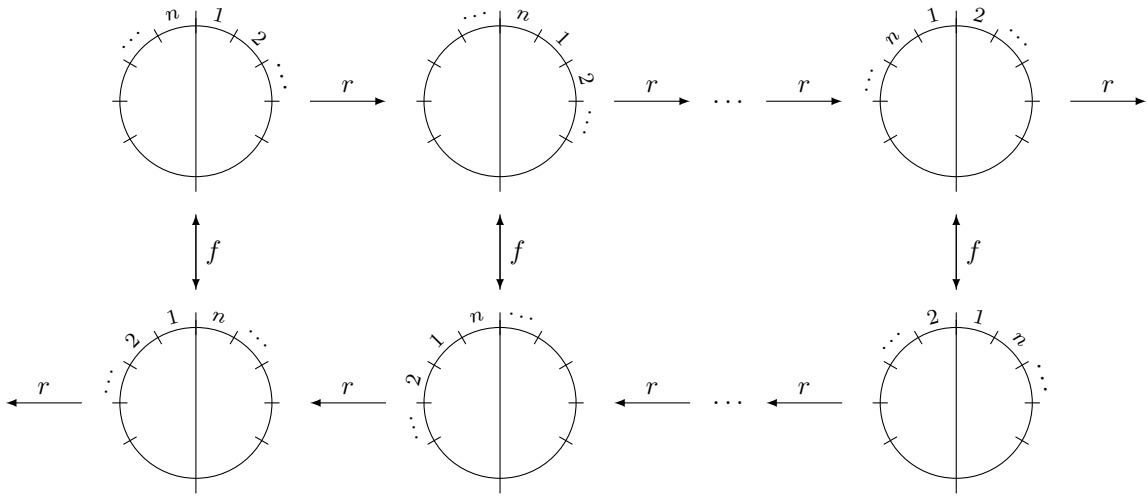}
\end{center}
\caption{Some of the $2n$ ``frames of reference" obtained by action of the dihedral group $D_n$ on the identity arrangement. Clockwise rotation is denoted by $r$ and the reflection in the vertical axis (flip) by $f$.}\label{fig:frames}
\end{figure}

We note that this subtlety is often not considered in the inversion distance literature (the initial paper by~\citet{Watterson-chrom-reversal-1982} is an exception).  This may be because in a model in which inversions of any length are equally probable, the circular chromosome can be treated as if it were linear: as remarked above, an inversion across the origin of replication is equivalent to one on the complementary region.  This feature is not enough, however, to account for the rotation and reflection symmetry, and to find the minimal distance under existing models it is still necessary to evaluate the distance under the different frames of reference (as shown in Figure~\ref{fig:frames}) obtained by the action of the $2n$ elements of the dihedral group.

\section{The two-region inversion distance problem for circular genomes}\label{s:two.reg.model}

For the remainder of the paper we focus on a constrained model in which only inversions of two adjacent regions are permitted, and in which we ignore orientation of the regions.  The restriction to two-region inversions allows us to exploit the theory of Coxeter groups.  In this model the group $G$ is the symmetric group $S_n$ and the generating set $\inv$ is the set of two-region inversions.  Note that because we are on the circle this is not a standard generating set for the symmetric group.  Notation and a group presentation of $(G,\inv)$ are given in Section~\ref{sec:2reg.model}.  

The key idea we exploit is that the best way to view circular permutations is by lifting the problem to the (extended) affine symmetric group.  This is the group of periodic permutations of the integers, and is a natural place to study circular permutations because by virtue of being on a circle, these are also periodic.  The affine symmetric group and the extended affine symmetric group are introduced in Sections~\ref{sec:affine} and~\ref{sec:background.affine}.

An important result that we use is a theorem of Shi (Theorem~\ref{t:shi.length}) that gives a formula for the length of an element of the affine symmetric group with respect to the standard generators.  Unfortunately, it cannot be used directly because despite the connection between circular and affine permutation groups, they are not identical --- for one thing the affine symmetric group is infinite and the circular permutation group is finite.  In fact the circular permutation group is a quotient of the affine symmetric group, and when we ``lift" a circular permutation to the affine situation (effectively looking at its pre-image) we have infinitely many choices.  The goal of 
Sections~\ref{sec:crossings} and~\ref{sec:min.length.rep} is to present a way to choose a representative of a circular permutation in the affine symmetric group that has the \emph{same length} as the circular permutation.  In this way we derive a method for finding the inversion-length of the circular permutation: choose a pre-image in the affine symmetric group that has the same length, and then apply Shi's formula.

The group-theoretic results in Sections~\ref{sec:crossings} and~\ref{sec:min.length.rep} are, as far as we are aware, new to algebra.  These are examples of answers to group-theoretic questions that arise directly from the biological models.  In other words, the group-theoretic questions asked, relating to lifts of circular permutations, are not ones that have arisen in other applications and so it is necessary to prove new claims.

In particular, in Section~\ref{sec:crossings} we prove that \emph{if} a lift of a circular permutation is of minimal length out of all possible lifts, then it satisfies some constraints on the ``crossings" (Definition~\ref{d:crossing.number}) that are possible.  These constraints are summarized in Corollaries~\ref{c.crossing.numbers} and~\ref{c.upper.lower.limits}.  Finally in Section~\ref{sec:min.length.rep} we prove the main result of this paper, Theorem~\ref{thm:minimal.is.shortest}, that in an affine permutation that is a minimal lift of a circular permutation, each element of $\n=\{1,2,\dots,n\}$ is moved a minimal amount.  This result (almost) determines the lift from the circular permutation to the affine symmetric group that gives the minimal length, as required.

\subsection{The two-region inversion model}\label{sec:2reg.model}

The group generators for this model are inversions of two regions, that is, simple transpositions $s_i=(i\ i+1)$ swapping regions $i$ and $i+1$.  However because we are on the circle with $n$ regions we consider regions to be equivalence classes of integers mod $n$, so that $s_n=(n\ n+1)$ is the transposition swapping regions $n$ and 1, and our group is the set of bijections on $\Z_n$.  

If we were to consider only the generators $s_1,\dots s_{n-1}$ then this group would be precisely the standard presentation for the symmetric group as a Coxeter group of type $A$.  The relations in this case are 
\begin{align*}
s_i^2&=1&&\text{for each $i=1,\dots,n-1$}, \\
s_is_j&=s_js_i&&\text{if }|i-j|>1,\text{ and}\\
s_is_{i+1}s_i&=s_{i+1}s_is_{i+1}&&\text{for each }i=1,\dots,n-2.
\end{align*}

The additional generator $s_n$ can be written in terms of these generators as the conjugate 
\[s_n=s_{n-1}s_{n-2}\dots s_2s_1s_2\dots s_{n-2}s_{n-1}\]
and it behaves in a precisely analogous manner to the above, giving a presentation (via Tietze transformations) for our circular permutation group with generators $\{s_i\mid i=1,\dots,n\}$ and relations:
\begin{align*}
s_i^2&=1&&\text{for each $i=1,\dots,n$}, \\
s_is_j&=s_js_i&&\text{if }i-j\mod n\neq\pm 1,\text{ and}\\
s_is_{i+1}s_i&=s_{i+1}s_is_{i+1}&&\text{for each }i=1,\dots,n-1.
\end{align*}

While the length function for the symmetric group with respect to the Coxeter-type presentation is well-studied (see for example~\citet{Hum90}), the additional generator provided by the circle makes the relevant theorems inapplicable.  Fortunately, there is another way to view this group using the affine symmetric group $\tilde S_n$.

In the remainder of this section we show how to lift a circular permutation to the affine symmetric group in such a way that the length in the affine symmetric group is the minimal number of 2-region inversions required to sort the circular permutation (see Figure \ref{fig:liftingstrategy}).   Because we know how to calculate length in the affine symmetric group (see next section), we can then use this to find the length of the circular permutation.
We will in fact be working often in the \emph{extended} affine symmetric group $\tilde S_n^{ext}$, which we introduce in Section~\ref{sec:background.affine}.  This group is defined in the same way as the affine symmetric group, but without the restriction on the sum of the images of a permutation given by Equation~\eqref{eq:balance} below.

\subsection{The affine symmetric group}\label{sec:affine}

\begin{figure}
\begin{center}
\includegraphics{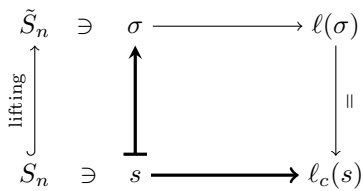}
\end{center}
\caption{The strategy of lifting a circular permutation $s$ to an affine permutation $\sigma$ in order to calculate its length. Thick arrows denote the mappings that are constructed in this paper.}
\label{fig:liftingstrategy}
\end{figure}
Recall that $\n:=\{1,2,\dots,n\}$.
\begin{definition}[affine permutation]\label{def:affine.perm}
A bijection $\sigma:\Z\to\Z$ is an affine permutation if it satisfies the following two conditions:
\begin{description}
\item[periodic:]\begin{equation}\sigma(i+n)=\sigma(i)+n\text{ for all }i\in\Z\end{equation} 
\item[balanced:] 
\begin{equation}\label{eq:balance}
   \sum_{i\in\n } \sigma(i)
     \;=\; \sum_{i\in\n } i
     \;=\; \frac{n(n+1)}2
\end{equation}
\end{description}
following \citet{lusztig1983squareintegrable}.
\end{definition} 
The affine symmetric group $\tilde S_n$ can be realized as the group of affine permutations.  The symmetric group $S_n$ can be identified with a subgroup of $\tilde S_n$ by extending each permutation from $\n $ to $\Z$ obeying the periodicity requirement, and can also be obtained by projecting from $\tilde S_n$ by reducing domain and codomain mod $n$.  
In particular the generators $\tilde s_i$ of $\tilde S_n$ are the periodic permutations $\tilde s_i(i+kn)=i+1+kn$ and $\tilde s_i(i+1+kn)=i+kn$ for all $k\in\Z$, and $\tilde s_i(j)=j$ if $j\not\equiv i$ or $i+1\mod n$.

As with the symmetric group in relation to the Coxeter presentation, there is an easy formula for the minimum number of (affine) transpositions $\tilde s_i$ required to represent an affine permutation, due to~\citet{shi1986kazhdan} (see also \citet{shi1994presentations}).  In our context, using the length function in $S_n$ would not take into account the additional generator $s_n$; the key idea here is that by lifting a circular permutation up to the affine symmetric group the length function in $\tilde S_n$ accounts for all $n$ generators.

The obstacle to using the length function in the affine symmetric group is that a circular permutation doesn't define a unique affine permutation: we need to choose for each $s(i)\in\Zn$ a representative $\s(i)\in\Z$, and we need to do this in such a way that the length of the obtained affine permutation is minimised.

To illustrate the problem, consider the circular permutation 
\[\s=\begin{pmatrix}1&2&3&4&5\\ 3&5&4&1&2\end{pmatrix},\] 
which we denote in ``window" notation by $[3,5,4,1,2]$.  This has $3\mapsto 4$, but lifting to $\tilde S_5$ we could have any choice of \(3\mapsto\{4+5j\mid j\in\Z\}\) (see Figure~\ref{fig:lift.to.affine}).  All choices give equivalent circular permutations.

\begin{figure}[ht!]
\includegraphics[width=\textwidth]{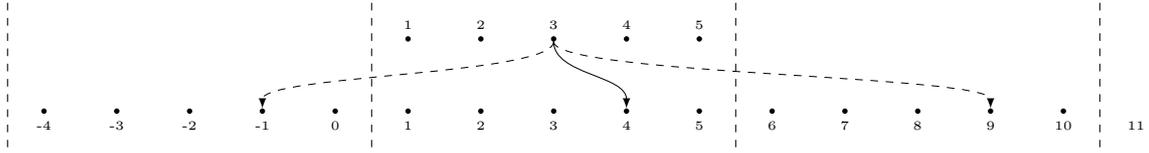}
\caption{Some of the infinitely many alternative ways to lift the mapping $3\mapsto 4$ to the affine symmetric group.}
\label{fig:lift.to.affine}
\end{figure}

\subsection{The extended affine symmetric group}\label{sec:background.affine}

Some choices of how to lift a circular permutation to an affine permutation will result in periodic permutations that do not satisfy condition (2) of Definition~\ref{def:affine.perm}: that is, they are not balanced.  Periodic permutations that are not balanced are part of the \emph{extended} affine symmetric group, as defined in this subsection.  We also give the Shi length formula (Theorem~\ref{t:shi.length}) that holds for both the extended and the non-extended affine symmetric groups, and define the crossing number of an affine permutation (Definition~\ref{d:crossing.number}).

\begin{definition}[extended affine permutation]\label{d:balanced}
An extended affine permutation is a periodic bijection $\sigma:\Z\to\Z$.
\end{definition}
That is, a \emph{balanced} extended affine permutation is an affine permutation: an element of $\tilde S_n$.  The set of extended affine permutations also forms a group, denoted $\tilde S_n^{ext}$, and it can be generated by the same set $\tilde s_i$ for $i\in\n$ together with the additional permutation $\tau:\Z\to\Z$ given by $\tau(i)=i+1$ for all $i\in\Z$.  

Any extended affine permutation $\s$ can be balanced by multiplication with a suitable power of~$\tau$.  Importantly, this power of $\tau$ does not affect the number of transpositions $\tilde s_i$ needed to express $\s$:  any extended affine permutation may be written as a product $\tau^kw$ where $w$ is an expression in the $\tilde s_i$ for $i=1,\dots,n$, and $k$ as well as the length of $w$ are unique.  The unique power of $\tau$ in such an expression is called the \emph{winding number} of the permutation.

In the context of circular permutations, $\tau$ corresponds to a rotation of the circle by one position.

Let $\PP = \big\{ (i,j)\in\n ^2 \mid i<j \big\}$, i.e.\ the set of all those pairs of regions that are in strictly ascending order.

\begin{theorem}[\citet{shi1986kazhdan}]\label{t:shi.length}
Given an extended affine permutation $\s$, the length $\ell(\s)$ of $\s$, that is the minimum number of transpositions occurring in any representation of $\s$ as a product of transpositions, is given by

\begin{equation}
\label{eq:shilength}
  \ell(\s)
    \;=\; \sum_{(i,j)\in\PP} \left|\left\lfloor \frac{\sigma(j)-\sigma(i)}{n}\right\rfloor\right|
  \;.
\end{equation}
\end{theorem}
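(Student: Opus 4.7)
The plan is to reduce the problem to the non-extended affine symmetric group, interpret the right-hand side of \eqref{eq:shilength} as the cardinality of a natural inversion set, and then invoke the standard Coxeter-theoretic identity between length and number of inversions.

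First I would establish $\tau$-invariance of both sides of \eqref{eq:shilength}. Any extended affine permutation factors as $\s=\tau^k w$ with $w\in\tilde S_n$; the conjugation relation $\tau\tilde s_i\tau^{-1}=\tilde s_{i+1}$ (indices mod $n$) lets one collect every $\tau$ on the left in any expression, giving $\ell(\s)=\ell(w)$. The right-hand side is also $\tau$-invariant, since $\tau$ shifts every value of $\s$ by $1$ and so preserves each difference $\s(j)-\s(i)$. This reduces the theorem to the case $\s\in\tilde S_n$.

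Next I would reinterpret the right-hand side combinatorially. Define
\[
\mathrm{Inv}(\s) \;=\; \bigl\{(a,b)\in\Z^2 : a<b\text{ and }\s(a)>\s(b)\bigr\}\big/\!\sim,
\]
where $(a,b)\sim(a+n,b+n)$; take as fundamental domain those representatives whose first coordinate lies in $\n$. For each $(i,j)\in\PP$ the classes with residue pair $\{i,j\}$ split into those of type $(i,j+mn)$ with $m\geq 0$ and those of type $(j,i+mn)$ with $m\geq 1$. Applying periodicity, the inversion condition $\s(a)>\s(b)$ becomes a strict linear inequality on $m$; since $\s(i)\not\equiv\s(j)\pmod n$, a short case split on the sign of $\s(j)-\s(i)$ shows that exactly one of the two types contributes, and that it contributes exactly $|\lfloor(\s(j)-\s(i))/n\rfloor|$ inversions. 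Summing over $\PP$ yields $|\mathrm{Inv}(\s)|$ equal to the right-hand side of \eqref{eq:shilength}.

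Finally I would prove $\ell(\s)=|\mathrm{Inv}(\s)|$ by the usual two-way induction. For the upper bound, if $\s\neq e$ then by injectivity and periodicity there is some $k$ with $\s(k)>\s(k+1)$; a direct calculation shows $\mathrm{Inv}(\s\tilde s_k)=\mathrm{Inv}(\s)\setminus\{[k,k+1]\}$, so induction on $|\mathrm{Inv}(\s)|$ produces a product of exactly that many generators representing $\s$. For the lower bound, each right multiplication by a generator adds or removes at most one equivalence class from $\mathrm{Inv}$, so any length-$\ell$ product has at most $\ell$ inversions, giving $\ell(\s)\geq|\mathrm{Inv}(\s)|$. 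The main obstacle is the combinatorial bookkeeping in the second step: one must verify that the two families of pair-types exhaust each inversion class exactly once, and that the elementary count of nonnegative integers satisfying the resulting strict inequality collapses into the single absolute-value-of-floor expression in Shi's formula, which requires careful tracking of strict versus non-strict inequalities and of the separate sign cases.
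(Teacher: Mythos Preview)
The paper does not actually prove this theorem: its entire proof reads ``See Lemma~4.2.2 of \citet{shi1986kazhdan}, pages~68--70.'' Your outline is correct and is essentially the standard argument that Shi gives in the cited lemma: reduce to $\tilde S_n$ by $\tau$-invariance, identify the right-hand side of \eqref{eq:shilength} with the number of equivalence classes of affine inversions, and then run the usual Coxeter exchange argument. So there is nothing in the paper to compare against beyond the bare citation, and your proposal supplies precisely the details the paper omits. One minor imprecision worth tightening: the set equality $\mathrm{Inv}(\s\tilde s_k)=\mathrm{Inv}(\s)\setminus\{[k,k+1]\}$ is not literally true, because right multiplication by $\tilde s_k$ permutes (rather than fixes) the inversion classes that involve a residue in $\{k,k+1\}$; what is true, and what your induction actually needs, is that $|\mathrm{Inv}(\s\tilde s_k)|=|\mathrm{Inv}(\s)|-1$ whenever $\s(k)>\s(k+1)$, via the bijection $[a,b]\mapsto[\tilde s_k(a),\tilde s_k(b)]$ on the remaining classes.
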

\begin{proof}
See Lemma 4.2.2 of \citet{shi1986kazhdan}, pages 68--70.
\end{proof}

\begin{definition}\label{d:crossing.number}
Given an extended affine permutation $\s$, and $(i,j)\in\PP$, we define the \textbf{crossing number} $\kappa_{i,j}(\s)$ of the positions $i$ and $j$ in $\s$ as
\[
  \kappa_{i,j}(\s)
    \;=\; \left\lfloor \frac{\s(j)-\s(i)}{n} \right\rfloor
  \;.
\]
Associating to $\s$ a diagram as in Figure~\ref{fig:lift.to.affine}, the crossing number for the pair $(i,j)$, $\kappa_{i,j}(\s)$, is the number of strands $(j+kn)\mapsto\s(j+kn)$ ($k\in\Z$) that cross the strand $i\mapsto\s(i)$, where crossings from the left are counted positive and crossings from the right are counted negative.
\end{definition}

\begin{definition}
For $k\in\Z$ we define the set of all strictly ascending pairs with crossing number exactly $k$ in $\s$ as
\[
  I_k(\s)
    \;=\; \big\{ (i,j)\in \PP \mid \kappa_{i,j}(\s)=k \big\}
    \;=\; \big\{ (i,j)\in \PP \mid kn\le \s(j)-\s(i)<(k+1)n \big\}
  \;.
\]
\end{definition}

Note that 
\begin{align}
    \sum_{(i,j)\in\PP} \left|\left\lfloor \frac{\sigma(j)-\sigma(i)}{n}\right\rfloor\right|
    \;&=\; \sum_{(i,j)\in\PP} \left| \kappa_{i,j}(\s) \right|\notag\\
    \;&=\; \sum_{k\in\Z} |k|\cdot|I_k(\s)|\notag\\
    \;&=\; \sum_{k\in\N^+} |k|\cdot\big(|I_k(\s)|+|I_{-k}(\s)|\big).\label{eq:affine.length.var}
\end{align}

\subsection{Crossing numbers for minimum length representatives}\label{sec:crossings}

Thanks to Shi's formula (Theorem~\ref{t:shi.length}), the crossing numbers are closely related to length.  So given a minimum length representative of a circular permutation, there should be some constraints on the crossing numbers.  
In this section we show that the only crossing numbers that can occur in a minimum length representative of a given circular permutation are $-1$, $0$, and $+1$.  

We start by proving some ``transitivity'' constraints for pairs of crossing numbers involving a common position.  For instance we provide constraints on the crossing number of the pair $(i,k)$ when we have crossing numbers for $(i,j)$ and $(j,k)$ (Part (i)).

\begin{lemma}\label{l:crossing.number.transitivity}
Let $\s$ be an extended affine permutation.  
The following hold:
\begin{enumerate}[\upshape{(}i\upshape{)}]
\item
 If $(i,j)\in I_r(\s)$ and $(j,k)\in I_s(\s)$ then $(i,k)\in I_{r+s}(\s)\cup I_{r+s+1}(\s)$.

\item
If $(i,j)\in I_r(\s)$ and $(i,k)\in I_s(\s)$ then $(j,k)\in I_{s-r-1}(\s)\cup I_{s-r}(\s)$ or $(k,j)\in I_{r-s-1}(\s)\cup I_{r-s}(\s)$.

\item
If $(i,k)\in I_r(\s)$ and $(j,k)\in I_s(\s)$ then $(i,j)\in I_{r-s-1}(\s)\cup I_{r-s}(\s)$ or $(j,i)\in I_{s-r-1}(\s)\cup I_{s-r}(\s)$.

\end{enumerate}
\end{lemma}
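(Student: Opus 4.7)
The plan is to read all three claims directly off the definition
\[
(i,j)\in I_r(\s) \;\Longleftrightarrow\; rn \le \s(j)-\s(i) < (r+1)n,
\]
by adding or subtracting the two hypothesised double inequalities and taking the resulting floor. No further structure is needed---not even the periodicity or balancedness of $\s$---since the computation is purely arithmetic on the three integers $\s(i),\s(j),\s(k)$. The one bookkeeping subtlety is that the ascending-pair convention on $\PP$ forces a case split in parts (ii) and (iii), because the hypotheses do not pin down the relative order of the two non-common indices.

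For part (i), the hypotheses already give $i<j<k$, so $(i,k)\in\PP$ automatically. Adding the two double inequalities produces
\[
(r+s)n \;\le\; \s(k)-\s(i) \;<\; (r+s+2)n,
\]
whence $\kappa_{i,k}(\s)=\lfloor(\s(k)-\s(i))/n\rfloor\in\{r+s,\,r+s+1\}$, which is exactly the conclusion.

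For part (ii), writing $\s(k)-\s(j)=(\s(k)-\s(i))-(\s(j)-\s(i))$ and subtracting the two hypotheses yields
\[
(s-r-1)n \;<\; \s(k)-\s(j) \;<\; (s-r+1)n,
\]
with \emph{both} outer inequalities strict, because each subtraction pairs a strict bound on one summand with a weak bound on the other. I would then split on the order of $j$ and $k$: if $j<k$ then $(j,k)\in\PP$ and the floor of $(\s(k)-\s(j))/n$ lands in $\{s-r-1,\,s-r\}$, giving $(j,k)\in I_{s-r-1}(\s)\cup I_{s-r}(\s)$; if $k<j$ then $(k,j)\in\PP$ and negating flips the inequality to $(r-s-1)n<\s(j)-\s(k)<(r-s+1)n$, placing $\kappa_{k,j}(\s)$ in $\{r-s-1,\,r-s\}$. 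Part (iii) is then handled by the same manoeuvre, subtracting the two hypotheses on $\s(k)-\s(i)$ and $\s(k)-\s(j)$ to bound $\s(j)-\s(i)$, and case-splitting on the order of $i$ and $j$. The only obstacle worth naming is the strictness tracking in the displayed inequalities, without which the resulting floors would naively lie in three-element sets rather than the advertised two-element ones; beyond that the argument is routine.
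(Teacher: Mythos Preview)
Your proposal is correct and follows essentially the same approach as the paper: both arguments work directly from the defining inequalities $rn\le\s(j)-\s(i)<(r+1)n$, add or subtract them, and then case-split on the relative order of the two non-common indices in parts (ii) and (iii). Your explicit remark on tracking strictness (to ensure the floor lands in a two-element rather than three-element set) is a clarification the paper leaves implicit, but otherwise the proofs are the same.
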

\begin{proof}
\begin{enumerate}[\upshape{(}i\upshape{)}]
\item
 As $(i,j)\in I_r(\s)$ is equivalent to $rn\le \s(j)-\s(i)<(r+1)n$ and
 $(j,k)\in I_s(\s)$ is equivalent to $sn\le \s(k)-\s(j)<(s+1)n$, one has
 $(r+s)n\le \s(k)-\s(i)<(rs+2)n$, whence $(i,k)\in I_{r+s}(\s)\cup I_{r+s+1}(\s)$.
\item
 As $(i,j)\in I_r(\s)$ is equivalent to $-(r+1)n<\s(i)-\s(j)\le -rn$ and
 $(i,k)\in I_s(\s)$ is equivalent to $sn\le \s(k)-\s(i)<(s+1)n$, one has
 $(s-r-1)n<\s(k)-\s(j)<(s-r+1)n$, respectively $(r-s-1)n<\s(j)-\s(k)<(r-s+1)n$.
 If $j<k$, the former implies $(j,k)\in I_{s-r-1}(\s)\cup I_{s-r}(\s)$,
 while in the case $k<j$, the latter implies $(k,j)\in I_{r-s-1}(\s)\cup I_{r-s}(\s)$.
\item
 As $(i,k)\in I_r(\s)$ is equivalent to $rn\le \s(k)-\s(i)<(r+1)n$ and
 $(j,k)\in I_s(\s)$ is equivalent to $-(s+1)n<\s(j)-\s(k)\le -sn$, one has
 $(r-s-1)n<\s(j)-\s(i)<(r-s+1)n$, respectively $(s-r-1)n<\s(i)-\s(j)<(s-r+1)n$.
 If $i<j$, the former implies $(i,j)\in I_{r-s-1}(\s)\cup I_{r-s}(\s)$,
 while in the case $j<i$, the latter implies $(j,i)\in I_{s-r-1}(\s)\cup I_{s-r}(\s)$.
\end{enumerate}
\end{proof}

The idea now is the following:  Assume that $\alpha>1$ is the maximal integer such that $I_{-\alpha}(\s)\cup I_{\alpha}(\s)$ is non-empty.  This means that $|\alpha|$ is the maximal size of a crossing number in $\s$.  
We will define two transformations that reduce this, by removing the pairs with crossing numbers $-\alpha$ (respectively~$\alpha$) without increasing the absolute value of any crossing number, except from 0 to 1.  The proof of this claim uses the transitivity constraints in Lemma~\ref{l:crossing.number.transitivity} and the maximality of $\alpha$.

Given an extended affine permutation $\s$ and a subset $S\subseteq\n $, we define the extended affine permutation $\s^S$ by setting
\[
  \s^S(i) =
     \begin{cases}
        \s(i) + n & \text{if $i\in S$,} \\
        \s(i)     & \text{otherwise.}
     \end{cases}
  \;
\]
That is, $\s^S$ has the image of each element of $S$ shifted by $n$.  Note that this does not change the circular permutation that they correspond to.

\begin{lemma}\label{l.add.subtract.n}
For an extended affine permutation $\s$, if $\kappa_{i,j}(\s)=r$, then one has
\[
  \kappa_{i,j}(\s^S) =
     \begin{cases}
        r+1 & \text{if $i\notin S$ and $j\in S$,} \\
        r-1 & \text{if $i\in S$ and $j\notin S$,} \\
        r   & \text{otherwise.}
     \end{cases}
  \;
\]
\end{lemma}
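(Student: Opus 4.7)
The plan is essentially a direct computation from the definitions, with a small preliminary check that $\sigma^S$ is well-defined. First I would verify that $\sigma^S$, initially specified only on $\mathbf{n}$, extends uniquely to an extended affine permutation on all of $\Z$: since $\sigma$ is a bijection, the values $\sigma(1),\dots,\sigma(n)$ form a complete residue system modulo $n$, and shifting any subset of them by $n$ preserves this property, so the periodic extension of $\sigma^S$ to $\Z$ via $\sigma^S(i+kn)=\sigma^S(i)+kn$ is a bijection.

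Once $\sigma^S$ is a legitimate extended affine permutation, the crossing number $\kappa_{i,j}(\sigma^S)$ is defined as $\lfloor(\sigma^S(j)-\sigma^S(i))/n\rfloor$, so the claim reduces to tracking the effect of the $\pm n$ shifts on the numerator. I would split into the four cases dictated by membership of $i$ and $j$ in $S$:
\begin{itemize}
\item If $i,j\notin S$ or $i,j\in S$, the two shifts either do not occur or cancel, so $\sigma^S(j)-\sigma^S(i)=\sigma(j)-\sigma(i)$ and the crossing number is unchanged.
\item If $i\notin S$ and $j\in S$, then $\sigma^S(j)-\sigma^S(i)=\sigma(j)-\sigma(i)+n$, so dividing by $n$ and using $\lfloor x+1\rfloor=\lfloor x\rfloor+1$ gives $\kappa_{i,j}(\sigma^S)=r+1$.
\item If $i\in S$ and $j\notin S$, symmetrically $\sigma^S(j)-\sigma^S(i)=\sigma(j)-\sigma(i)-n$, yielding $\kappa_{i,j}(\sigma^S)=r-1$.
\end{itemize}

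There is no real obstacle here; the lemma is a routine bookkeeping statement whose only subtlety is making sure that the extension of $\sigma^S$ to $\Z$ is indeed a bijection (so that crossing numbers are defined at all), which follows from the complete-residue-system observation above. The lemma will then be used in later sections to modify a lift of a circular permutation by selectively shifting images, without altering the underlying circular permutation, in order to control crossing numbers and ultimately produce a minimum-length representative.
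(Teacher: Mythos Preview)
Your proof is correct and matches the paper's approach: the paper's own proof is the single line ``The claims follow immediately from Definition~\ref{d:crossing.number},'' and your case analysis simply spells out that immediate computation. Your extra remark verifying that $\sigma^S$ is indeed an extended affine permutation is a reasonable sanity check that the paper omits.
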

\begin{proof}
The claims follow immediately from Definition~\ref{d:crossing.number}.
\end{proof}

The following two propositions show how to choose a subset $S\subseteq\n$ in such a way that the largest crossing number for $\s^S$ is strictly lower than that of $\s$.

\begin{proposition}\label{p.kill.max}
Let $\s$ be an extended affine permutation, and assume that $\alpha>1$ is the maximal integer such that $I_{\alpha}(\s)\cup I_{-\alpha}(\s)$ is non empty.

For $S=\big\{ i\in\n  \mid \exists\, j\in\n  \,,\; (i,j)\in I_{\alpha}(\s)\big\}$ one has $I_{\alpha}(\s^S)=\varnothing$.  Moreover, for any $(i,j)\in\PP$ with $\kappa_{i,j}(\s)\ne0$, one has $|\kappa_{i,j}(\s^S)| \le|\kappa_{i,j}(\s)|$.
\end{proposition}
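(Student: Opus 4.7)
The plan is to translate both claims into conditions on $\kappa_{i,j}(\s)$ via Lemma~\ref{l.add.subtract.n}, and then close off the cases by feeding the transitivity estimates of Lemma~\ref{l:crossing.number.transitivity} with witnesses supplied by the definition of $S$, so that each impossible case contradicts either the maximality of $\alpha$ or the non-membership hypothesis in $S$.

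For part (1), I would argue by contradiction that no pair $(i,j) \in \PP$ can satisfy $\kappa_{i,j}(\s^S) = \alpha$. Lemma~\ref{l.add.subtract.n} pins $r := \kappa_{i,j}(\s)$ to one of $\alpha - 1$, $\alpha$, or $\alpha + 1$ depending on the membership pattern of $i, j$ in $S$. The value $\alpha + 1$ contradicts maximality of $\alpha$ directly. When both $i, j \notin S$, having $r = \alpha$ would force $i \in S$ by the very definition of $S$, a contradiction. The two remaining sub-cases both feature $j \in S$: using a witness $(j, k) \in I_\alpha(\s)$ and Lemma~\ref{l:crossing.number.transitivity}(i), the pair $(i, k)$ lands in $I_{2\alpha-1}(\s) \cup I_{2\alpha}(\s)$ (when $r = \alpha - 1$) or $I_{2\alpha}(\s) \cup I_{2\alpha+1}(\s)$ (when $r = \alpha$), both of which exceed $\alpha$ since $\alpha \ge 2$.

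For part (2), by Lemma~\ref{l.add.subtract.n} only the mixed-membership cases are non-trivial, so the task is to show $r \ge 1$ when $i \in S, j \notin S$ and $r \le -1$ when $i \notin S, j \in S$. The second claim is short: Lemma~\ref{l:crossing.number.transitivity}(i) applied to $(i, j) \in I_r(\s)$ and a witness $(j, k) \in I_\alpha(\s)$ forces $\kappa_{i,k}(\s) \in \{r + \alpha, r + \alpha + 1\}$, and maximality of $\alpha$ combined with $r \neq 0$ then yields $r \le -1$. For the first claim, use a witness $(i, k) \in I_\alpha(\s)$ and apply Lemma~\ref{l:crossing.number.transitivity}(ii): if $k < j$, both candidate crossing numbers for $(k, j)$ fall strictly below $-\alpha$ whenever $r \le -1$, contradicting maximality; if $j < k$, the candidate crossing numbers for $(j, k)$ either exceed $\alpha$ (contradicting maximality) or equal $\alpha$, which would force $j \in S$.

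The main delicate point is the last sub-case above, where $r = -1$ and $j < k$: here Lemma~\ref{l:crossing.number.transitivity}(ii) genuinely permits $\kappa_{j,k}(\s) = \alpha$ without violating maximality, so the contradiction must instead be extracted from the defining property of $S$ itself. Once this borderline situation is handled, the rest of the proof is a routine enumeration driven by the three transitivity formulas and the uniform bound $|\kappa_{\cdot,\cdot}(\s)| \le \alpha$ imposed by maximality.
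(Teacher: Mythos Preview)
Your proof is correct and follows essentially the same approach as the paper: both arguments push each case through Lemma~\ref{l:crossing.number.transitivity} using witnesses supplied by the definition of $S$, and close with either the maximality of~$\alpha$ or the membership contradiction $j\in S$. The only organizational difference is that the paper proves the ``moreover'' part first and then uses it to reduce the claim $I_\alpha(\s^S)=\varnothing$ to the single case $(i,j)\in I_\alpha(\s)$ with both $i,j\in S$, whereas you handle Part~(1) independently by a direct four-case enumeration; your extra sub-case $r=\alpha-1$, $i\notin S$, $j\in S$ is dispatched by the same transitivity argument, so nothing essential changes.
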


\begin{proof}
Assume that $(i,j)\in I_r(\s)$, so that $\kappa_{i,j}(\s)=r$, and suppose 
\[|\kappa_{i,j}(\s^S)| > |\kappa_{i,j}(\s)| = |r| > 0.\]
Then, by Lemma~\ref{l.add.subtract.n}, if $r>0$ we have $i\notin S$ and $j\in S$, and if  $r<0$ we have $i\in S$ and $j\notin S$.

If $r>0$ then there exists $k\in\{j+1,\ldots,n\}$ with $(j,k)\in S=I_{\alpha}(\s)$, so $(i,k)\in I_{\alpha+r}(\s)\cup I_{\alpha+r+1}(\s)$ by Lemma~\ref{l:crossing.number.transitivity} contradicting the maximality of $\alpha$.

If $r<0$ then there exists $k\in\{i+1,\ldots,n\}$ such that $(i,k)\in S=I_{\alpha}(\s)$.
By Lemma~\ref{l:crossing.number.transitivity}, either $j<k$ and $(j,k)\in I_{\alpha-r-1}(\s)\cup I_{\alpha-r}(\s)$,
or $k<j$ and $(k,j)\in I_{r-\alpha-1}(\s)\cup I_{r-\alpha}(\s)$. Since $r<0$, the only situation compatible with the maximality of $\alpha$ is $r=-1$ and $(j,k)\in I_{\alpha}(\s)$, but in this case $j\in S$, which is a contradiction.

By Lemma~\ref{l.add.subtract.n}, it only remains to show that $\kappa_{i,j}(\s^S)\ne\kappa_{i,j}(\s)$ for any $(i,j)\in I_{\alpha}(\s)$.  As $(i,j)\in I_{\alpha}(\s)$ implies $i\in S$, one can only have $\kappa_{i,j}(\s^S)=\kappa_{i,j}(\s)$ if $j\in S$, that is, if there  exists $k\in\{j+1,\ldots,n\}$ with $(j,k)\in S=I_{\alpha}(\s)$.  But then, by Lemma~\ref{l:crossing.number.transitivity}, $(i,k)\in I_{2\alpha}(\s)\cup I_{2\alpha+1}(\s)$, again contradicting the maximality of $\alpha$.
\end{proof}

A similar argument deals with the case in which $I_{\alpha}(\s)$ is empty:

\begin{proposition}\label{p.kill.min} 
Let $\s$ be an extended affine permutation, and assume that $\alpha>1$ is the maximal integer such that $I_{\alpha}(\s)\cup I_{-\alpha}(\s)$ is non empty.  Moreover assume that $I_{\alpha}(\s)=\varnothing$.

For $S=\big\{ j\in\n  \mid \exists\, i\in\n  \,,\; (i,j)\in I_{-\alpha}(\s)\big\}$ one has $I_{-\alpha}(\s^S)=\varnothing$.  Moreover, for any $(i,j)\in\PP$ with $\kappa_{i,j}(\s)\ne0$, one has $|\kappa_{i,j}(\s^S)| \le|\kappa_{i,j}(\s)|$.
\end{proposition}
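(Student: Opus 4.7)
The plan is to mirror the strategy of Proposition~\ref{p.kill.max}, but with the roles of the upper and lower positions of extremal pairs exchanged: here $S$ consists of the \emph{second} (larger) coordinates of pairs with crossing number $-\alpha$ rather than the first coordinates of pairs with crossing number $+\alpha$. The two key tools remain Lemma~\ref{l.add.subtract.n}, which computes $\kappa_{i,j}(\s^S)$ from the membership of $i,j$ in $S$, and Lemma~\ref{l:crossing.number.transitivity}, which restricts compatible triples of crossing numbers. The hypothesis $I_{\alpha}(\s)=\varnothing$ is what breaks the symmetry between $+\alpha$ and $-\alpha$, allowing us to work solely with the negative extreme.

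First I would establish the bound $|\kappa_{i,j}(\s^S)|\le|\kappa_{i,j}(\s)|$ for any $(i,j)\in\PP$ with $\kappa_{i,j}(\s)=r\ne0$. By Lemma~\ref{l.add.subtract.n}, a violation of this bound can arise only in one of two configurations: either $r>0$ with $i\notin S$ and $j\in S$, or $r<0$ with $i\in S$ and $j\notin S$. In the first configuration, $j\in S$ supplies some $k$ with $(k,j)\in I_{-\alpha}(\s)$, and Lemma~\ref{l:crossing.number.transitivity}(iii) applied to $(i,j)\in I_r(\s)$ and $(k,j)\in I_{-\alpha}(\s)$ places the pair involving $i$ and $k$ in $I_{r+\alpha-1}(\s)\cup I_{r+\alpha}(\s)$ or in $I_{-\alpha-r-1}(\s)\cup I_{-\alpha-r}(\s)$; either alternative produces a crossing number of absolute value strictly greater than $\alpha$ (using $r\ge1$ and $\alpha>1$), contradicting the maximality of $\alpha$. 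In the second configuration, $i\in S$ supplies some $k$ with $(k,i)\in I_{-\alpha}(\s)$, and Lemma~\ref{l:crossing.number.transitivity}(i) yields $(k,j)\in I_{r-\alpha}(\s)\cup I_{r-\alpha+1}(\s)$. Since $r\le-1$, both indices are at most $-\alpha$, and the only way to avoid contradicting maximality is $r=-1$ together with $(k,j)\in I_{-\alpha}(\s)$; but then $j\in S$, contradicting $j\notin S$.

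Next I would prove $I_{-\alpha}(\s^S)=\varnothing$. Given any $(i,j)\in I_{-\alpha}(\s)$, we have $j\in S$ by construction. If $i\notin S$, Lemma~\ref{l.add.subtract.n} gives $\kappa_{i,j}(\s^S)=-\alpha+1$, so $(i,j)$ leaves $I_{-\alpha}$. If $i\in S$, there exists $k$ with $(k,i)\in I_{-\alpha}(\s)$, and Lemma~\ref{l:crossing.number.transitivity}(i) applied to $(k,i)$ and $(i,j)$ gives $(k,j)\in I_{-2\alpha}(\s)\cup I_{-2\alpha+1}(\s)$. Because $\alpha>1$, already $-2\alpha+1<-\alpha$, contradicting maximality; so the case $i\in S$ cannot occur, and $I_{-\alpha}(\s^S)$ is empty.

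The main obstacle is purely book-keeping: Lemma~\ref{l:crossing.number.transitivity} has three clauses with several sign conventions and order-of-index branches, so care is needed to pick the correct clause and to verify that the resulting crossing numbers really violate the maximality of $\alpha$ in each subcase. There are no new conceptual ingredients beyond those already used for Proposition~\ref{p.kill.max}; the argument is essentially dual, with the hypothesis $I_{\alpha}(\s)=\varnothing$ ensuring that every invocation of maximality we need is available on the negative side.
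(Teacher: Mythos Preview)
Your approach is correct and is precisely the dual of the paper's proof of Proposition~\ref{p.kill.max}, which is all the paper indicates (``The proof is similar to that of Proposition~\ref{p.kill.max}''). One small slip: in the first configuration ($r>0$, $i\notin S$, $j\in S$), when $r=1$ and $i<k$ the pair $(i,k)$ may land in $I_{\alpha}(\s)$, whose crossing number has absolute value exactly $\alpha$, not strictly greater; here you must invoke the hypothesis $I_{\alpha}(\s)=\varnothing$ explicitly to complete the contradiction, rather than appealing to the maximality of $\alpha$ alone.
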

\begin{proof}
The proof is similar to that of Proposition~\ref{p.kill.max}.
\end{proof}

We now translate these results into the context of a circular permutation $s$.  In particular, we have the key conclusion that if $\s$ is a minimal affine representative of $s$ then its crossing numbers are at most $\pm 1$.

\begin{corollary}\label{c.crossing.numbers}
If $s$ is a circular permutation and $\s$ is an extended affine permutation representing $s$, whose length is minimal among all representatives of $s$, then $I_k(\s)=\varnothing$ for all $k\in\Z\setminus\{-1,0,+1\}$.
\end{corollary}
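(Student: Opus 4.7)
My plan is to argue by contradiction. Suppose $\sigma$ is a minimum-length representative of $s$ but some crossing number has absolute value at least two. Set $\alpha := \max\{|k| : I_k(\sigma) \ne \varnothing\} \ge 2$; by maximality, $I_k(\sigma) = \varnothing$ for all $|k| > \alpha$, so Propositions~\ref{p.kill.max} and~\ref{p.kill.min} apply. Depending on whether $I_\alpha(\sigma)$ is non-empty or empty, I apply the corresponding proposition to obtain a subset $S \subseteq \n$ such that $\sigma^S$ represents the same circular permutation $s$, one of $I_{\pm\alpha}(\sigma^S)$ becomes empty, and $|\kappa_{i,j}(\sigma^S)| \le |\kappa_{i,j}(\sigma)|$ whenever $\kappa_{i,j}(\sigma) \ne 0$. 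The target is $\ell(\sigma^S) < \ell(\sigma)$, contradicting minimality.

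Using Shi's formula (Theorem~\ref{t:shi.length}) and Lemma~\ref{l.add.subtract.n}, the length change decomposes as a sum of contributions $\pm 1$ coming only from pairs $(i,j) \in \PP$ for which exactly one of $i,j$ lies in $S$. The proposition's bound rules out any $+1$ contribution from pairs with $\kappa_{i,j}(\sigma) \ne 0$, and the explicit form of $S$ forces every pair in $I_\alpha(\sigma)$ (respectively $I_{-\alpha}(\sigma)$) to contribute $-1$ (the endpoint in $S$ is determined by the definition of $S$, and the argument from the proof of Proposition~\ref{p.kill.max} shows the other endpoint cannot also lie in $S$). Hence
\[
  \ell(\sigma^S) - \ell(\sigma) \;\le\; -|I_\alpha(\sigma)| + Z,
\]
where $Z$ counts pairs $(i,j) \in I_0(\sigma)$ whose crossing number becomes $\pm 1$ in $\sigma^S$. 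The desired contradiction reduces to showing $Z < |I_\alpha(\sigma)|$, or more generally that the zero-pair gains are dominated by the non-zero losses.

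The core technical step, which I expect to be the main obstacle, is controlling $Z$. Here I would use Lemma~\ref{l:crossing.number.transitivity}. For $(i,j) \in I_0(\sigma)$ with $i \in S$, $j \notin S$, there exists $k_i > i$ witnessing $(i,k_i) \in I_\alpha(\sigma)$; applying Lemma~\ref{l:crossing.number.transitivity}(ii) to the pairs $(i,j)$ and $(i,k_i)$, together with the maximality of $\alpha$ and the condition $j \notin S$, pins down the crossing class of $(j,k_i)$ or $(k_i,j)$ and, in particular, excludes any configuration that would increase a non-zero crossing in $\sigma^S$. A symmetric analysis covers the case $i \notin S$, $j \in S$, and the analogous configurations arising from Proposition~\ref{p.kill.min}. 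From these constraints I would construct an injection from problematic zero pairs into a set of additional $-1$-contributing non-zero pairs, giving the strict inequality. If a direct one-shot matching proves too delicate, a fallback is to iterate the propositions alternately: the resulting sequence of representatives of $s$ has strictly decreasing maximum absolute crossing number, and inspecting the first step at which the length strictly drops provides the contradiction with the minimality of $\sigma$.
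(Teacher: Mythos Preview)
You take the same route as the paper---assume $\sigma$ is minimal with some $|k|\ge 2$, apply Proposition~\ref{p.kill.max} or~\ref{p.kill.min}, and seek a strictly shorter representative---but you go further by noticing that the propositions only control pairs with $\kappa_{i,j}(\sigma)\ne 0$, so pairs in $I_0(\sigma)$ may move to $I_{\pm 1}(\sigma^S)$ and raise the length (your $Z$ term). The paper's proof is a single sentence asserting that $\sigma^S$ has smaller length and does not address this point; you are right to flag it.

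However, your proposed fix has a genuine gap. The inequality $Z<|I_\alpha(\sigma)|$ is false in general, and no injection of the kind you sketch can exist. Take $n=4$ and the extended affine representative $\sigma=[1,10,3,4]$ of the identity circular permutation: here $\alpha=2$, $I_2(\sigma)=\{(1,2)\}$, $I_{-2}(\sigma)=\{(2,3),(2,4)\}$, $I_0(\sigma)=\{(1,3),(1,4),(3,4)\}$, and $\ell(\sigma)=6$. Proposition~\ref{p.kill.max} gives $S=\{1\}$ and $\sigma^S=[5,10,3,4]$, with $\ell(\sigma^S)=7>6$; here $Z=2$ (the pairs $(1,3)$ and $(1,4)$ move from $I_0$ to $I_{-1}$) while $|I_\alpha(\sigma)|=1$, and there are no additional non-zero pairs whose contribution decreases. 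Thus a single application of the proposition can strictly \emph{increase} length, so neither the paper's one-line assertion nor your bound on $Z$ goes through as stated. Your fallback is also incomplete: although alternating the two propositions does eventually force $\alpha\le 1$, you give no argument that any iterate has length strictly below $\ell(\sigma)$; ``the first step at which the length strictly drops'' presupposes exactly what must be shown. A correct argument must somehow use the assumed minimality of $\sigma$ (the counterexample above is not minimal), which neither your proposal nor the paper's proof does.
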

\begin{proof}
If $\sigma$ is a representative of $s$ and $I_k(\s)\ne\varnothing$ for some $k\in\Z\setminus\{-1,0,+1\}$, then application of Proposition~\ref{p.kill.max} or Proposition~\ref{p.kill.min} produces another representative of $s$ that has, by Theorem~\ref{t:shi.length}, smaller length.
\end{proof}

\begin{corollary}\label{c.upper.lower.limits}
If $s$ is a circular permutation and $\s$ is an extended affine permutation representing $s$, whose length is minimal among all representatives of $s$, then $\max\big\{\s(i)\mid i\in\n \big\}-\min\big\{\s(i)\mid i\in\n \big\} < 2n$.
\end{corollary}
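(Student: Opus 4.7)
The plan is to derive this bound as a direct consequence of Corollary~\ref{c.crossing.numbers}, by translating the bound on crossing numbers into a bound on the spread of the image values $\s(i)$, $i\in\n$.

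First I would unpack what Corollary~\ref{c.crossing.numbers} says in terms of image differences. For any $(i,j)\in\PP$ (so $i<j$ in $\n$), the condition $\kappa_{i,j}(\s)\in\{-1,0,1\}$ together with the definition $\kappa_{i,j}(\s)=\lfloor(\s(j)-\s(i))/n\rfloor$ gives the two-sided inequality $-n\le\s(j)-\s(i)<2n$. So for every ordered pair of positions in $\n$, the images differ by strictly less than $2n$ in one direction and by at most $n$ in the other.

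Next, pick $i^{*},j^{*}\in\n$ realising $\s(i^{*})=\max\{\s(i)\mid i\in\n\}$ and $\s(j^{*})=\min\{\s(i)\mid i\in\n\}$. If $i^{*}=j^{*}$ the difference is $0$ and we are done, so suppose they differ. The argument then splits on whether $j^{*}<i^{*}$ or $i^{*}<j^{*}$. In the first case $(j^{*},i^{*})\in\PP$, so the inequality above applied to this pair gives $\s(i^{*})-\s(j^{*})<2n$ directly. In the second case $(i^{*},j^{*})\in\PP$; here $\s(j^{*})-\s(i^{*})\le 0$ by the extremality, so the relevant inequality $\s(j^{*})-\s(i^{*})\ge -n$ gives the even stronger bound $\s(i^{*})-\s(j^{*})\le n<2n$.

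I do not anticipate a genuine obstacle: the real work has already been carried out in Propositions~\ref{p.kill.max} and~\ref{p.kill.min} and packaged in Corollary~\ref{c.crossing.numbers}. The only thing to watch out for is a clean handling of the two orderings of $i^{*}$ and $j^{*}$ in $\n$, since $\PP$ is defined only for strictly ascending pairs; once that case split is made, each case is one line of floor-function arithmetic.
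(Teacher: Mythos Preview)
Your proposal is correct and follows essentially the same route as the paper: both derive the bound directly from Corollary~\ref{c.crossing.numbers} by translating $\kappa_{i,j}(\s)\in\{-1,0,1\}$ into the inequality $-n\le\s(j)-\s(i)<2n$ for $(i,j)\in\PP$. The paper's proof is slightly terser---it fixes only the position $k$ realising the minimum and asserts $\s(j)-\s(k)<2n$ for all $j\in\n$---whereas you make the case split on the relative order of the extremal positions explicit, but the underlying argument is the same.
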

\begin{proof}
Choose $k\in\n $ such that $\s(k)=\min\big\{\s(i)\mid i\in\n \big\}$.
One has $\s(j)-\min\big\{\s(i)\mid i\in\n \big\}=\s(j)-\s(k)<2n$ for all $j\in\n $ by Corollary~\ref{c.crossing.numbers}, so the claim holds.
\end{proof}

We have now placed significant constraints on an important feature of an affine permutation (its crossing numbers), when it is a minimal length representative of a circular permutation.  The question remains of how to choose an affine representative that satisfies these constraints on the crossing numbers.  This is addressed in the next section.

\subsection{Finding a minimum length representative}\label{sec:min.length.rep}

In this section we show that a minimal affine representative of a circular permutation must have images for each $i\in\n$ that are the minimal possible distance from $i$ (Theorem~\ref{thm:minimal.is.shortest}).  To begin with, we will need to define the \emph{nett} crossing number of a position $i\in\n$ (Definition~\ref{d:nett.crossing.num}), as well as the \emph{winding number} of $\s$.

\begin{definition}\label{d:nett.crossing.num}
Given an extended affine permutation $\s$, and $i\in\n $, we define the \textbf{nett crossing number} $\nu_{i}(\s)$ of the position $i$ to be
\[
  \nu_{i}(\s) \;=\; \sum_{j\in\n } \kappa_{i,j}(\s)
  \;.
\]
\end{definition}

Any vertical line in general position through a diagram in $\tilde S_n^{ext}$ has the same number of nett crossings (crossings from the left minus crossings from the right, or vice versa).  This number is the \emph{winding number} of the permutation, and is the exponent on $\tau$ in its expression in terms of the $s_i$ and $\tau$, as described in Section~\ref{sec:background.affine} (the elements of winding number zero are the affine permutations, that is, the ``balanced'' ones).  This feature follows because each generator $s_i$ has zero nett crossings, and nett crossings are invariant under Reidemeister moves.

We now have a lemma that is a direct consequence of the results in the previous section, showing that in minimal representatives the images of regions cannot move more than $n$.  This is somewhat intuitive, since if it were not true it would mean that in a minimal sequence of inversions a region could move more than a full circle to its destination, which (intuitively) seems unlikely. 

\begin{lemma}\label{lem:close.image}
If $\s$ is a minimal balanced representative of a circular permutation, then $|\s(i)-i|<n$.
\end{lemma}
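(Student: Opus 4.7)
The plan is to argue by contradiction. Suppose $\s$ is a minimal balanced representative of a circular permutation $s$ and some $i_0\in\n$ satisfies $|\s(i_0)-i_0|\ge n$. Because length is invariant under inversion and $\s^{-1}$ is a minimal balanced representative of $s^{-1}$, a short calculation using periodicity shows that the case $\s(i_0)\le i_0-n$ reduces to the case $\s^{-1}(i_0')\ge i_0'+n$ with $i_0'=\s(i_0)+n\in\n$ (this index lies in $\n$ thanks to the global bounds on $\s$ implied by balance and Corollary~\ref{c.upper.lower.limits}). So I may assume without loss of generality that $\s(i_0)\ge i_0+n$.

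First I would pin down $\s(i_0)$ precisely. Since $\s$ is a periodic bijection, the values $\s(1),\ldots,\s(n)$ are $n$ distinct integers hitting each residue class mod $n$ exactly once, and by balance their sum equals $n(n+1)/2$. This forces $\min\s\le 1$, so Corollary~\ref{c.upper.lower.limits} yields $\max\s\le 2n$. Combined with $\s(i_0)\ge i_0+n$, this gives $\s(i_0)=i_0+n+k$ for some integer $k$ with $0\le k\le n-i_0$, and in particular $s(i_0)=i_0+k$.

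Next I would invoke Corollary~\ref{c.crossing.numbers} to extract sharp lower bounds for $\s(j)$ when $j\ne i_0$. The constraints $\kappa_{i_0,j}\ge -1$ (for $j>i_0$) and $\kappa_{j,i_0}\le 1$ (for $j<i_0$), together with the congruence $\s(j)\equiv s(j)\pmod n$ and distinctness of residues, imply $\s(j)\ge s(j)$ if $j>i_0$ and $s(j)\ge i_0+k+1$; $\s(j)\ge s(j)+n$ if $j>i_0$ and $s(j)\le i_0+k-1$; $\s(j)\ge s(j)-n$ if $j<i_0$ and $s(j)\ge i_0+k+1$; and $\s(j)\ge s(j)$ if $j<i_0$ and $s(j)\le i_0+k-1$. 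Setting $B=\{j>i_0:s(j)\le i_0+k-1\}$ and $C=\{j<i_0:s(j)\ge i_0+k+1\}$, a bijective counting argument (using that $s$ is a bijection and $s(i_0)=i_0+k$, so no other position meets the threshold) yields $|B|-|C|=k$.

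Summing the bounds, the minimum possible value of $\sum_{j\ne i_0}\s(j)$ is $\sum_{j\ne i_0}s(j)+n(|B|-|C|)=\tfrac{n(n+1)}{2}-(i_0+k)+nk$, whereas balance requires $\sum_{j\ne i_0}\s(j)=\tfrac{n(n+1)}{2}-(i_0+n+k)$. Subtracting these gives $n(k+1)\le 0$, which contradicts $n\ge 1$ and $k\ge 0$. The hardest step is the identity $|B|-|C|=k$: because $i_0$ is the unique position whose $s$-value equals the threshold $i_0+k$, the $n-i_0-k$ positions mapping strictly above the threshold and the $i_0+k-1$ positions mapping strictly below partition across the two sides of $i_0$, and comparing this partition with the sizes $|\{j>i_0\}|=n-i_0$ and $|\{j<i_0\}|=i_0-1$ produces the identity. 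Once this is in place, the balance equation and the crossing-number bounds collide directly.
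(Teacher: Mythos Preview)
Your proof is correct, but it takes a genuinely different route from the paper's.

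The paper's argument is short and geometric: assuming $\s(i)\ge i+n$, Corollary~\ref{c.crossing.numbers} forces every strand starting to the right of $i$ to end to the right of $i$ as well (otherwise it would cross the $i$-strand twice). Hence a vertical line just to the right of $i$ can only be crossed from the left, which makes the winding number strictly positive and contradicts balance. The symmetric case $\s(i)\le i-n$ is handled the same way.

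Your approach is more computational. You reduce to the case $\s(i_0)\ge i_0+n$ by passing to $\s^{-1}$, then use Corollary~\ref{c.upper.lower.limits} together with balance to pin down $\s(i_0)=i_0+n+k$ exactly, derive explicit lower bounds on each $\s(j)$ from the crossing-number constraints, and sum them to contradict the balance identity. The combinatorial identity $|B|-|C|=k$ is the crux and is correct: since $s$ is a bijection with $s(i_0)=i_0+k$, the positions mapping strictly above and strictly below the threshold $i_0+k$ partition as you describe.

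What each buys: the paper's proof is shorter and highlights the winding-number interpretation, but it relies on the (standard, though not proved in the paper) identification of winding number with net crossings of a vertical line, and its verification that no right-to-left crossings occur tacitly also needs the bound on $\s(j')$ for $j'<i_0$ (to handle translates), which is not spelled out. Your argument is longer but entirely self-contained and elementary: it uses only the stated Corollaries and arithmetic. Your reduction via inverses is sound (inversion preserves balance, length, and minimality), and the bound $i_0'=\s(i_0)+n\in\n$ does follow from $\min\s\ge 1-n$, which in turn follows from balance plus Corollary~\ref{c.upper.lower.limits} as you claim.
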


\begin{proof}
If $|\s(i)-i|\ge n$ then there are two alternatives: $\s(i)\ge i+n$ or $\s(i)\le i-n$.  We show that either leads to a contradiction.

Suppose that $\s(i)\ge i+n$.  Then $\s(i-n)\ge i$ by periodicity.
If there was a $j>i$ that had an image $\s(j)$ less than $\s(i-n)=\s(i)-n$, then it would cross the $i$ strand twice, violating Lemma~\ref{c.crossing.numbers}.
This implies that the winding number of $\s$ is strictly positive, because in the vertical line down from $i+\varepsilon$ (for $\varepsilon$ sufficiently small), the only crossings can be from the left.  This violates the balance of $\s$ and so is a contradiction.

The second case is symmetric.
\end{proof}

The following is a technical consequence of Lemma~\ref{lem:close.image} that is needed in the next proof: 

\begin{lemma}\label{lem:add.jn}
Suppose $j\in\Z\setminus\{0\}$.  In a minimal balanced representative, 
\[
|\s(i)-i+jn|=\begin{cases}
\s(i)-i+jn&\ \text{if }j>0\\
-(\s(i)-i+jn)&\ \text{if }j<0.
\end{cases}
\]
\end{lemma}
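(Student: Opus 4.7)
The plan is to observe that the claimed equalities merely express the sign of $\s(i)-i+jn$: if this quantity is positive for every nonzero positive $j$ and negative for every negative $j$, then the two formulas follow directly from the definition of absolute value. So the entire content of the lemma is to pin down the sign of $\s(i)-i+jn$.

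For this, I would invoke Lemma~\ref{lem:close.image}, which for a minimal balanced representative gives the strict two-sided bound
\[
   -n \;<\; \s(i)-i \;<\; n.
\]
This is the only ingredient needed.

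From here the verification is a short case split on $j$. For $j=1$ the lower bound gives $\s(i)-i+n>-n+n=0$, while for $j\ge 2$ we have $\s(i)-i+jn>-n+jn=(j-1)n\ge n>0$; either way $\s(i)-i+jn>0$, so the absolute value equals $\s(i)-i+jn$. Symmetrically, for $j=-1$ the upper bound yields $\s(i)-i-n<n-n=0$, and for $j\le -2$ we have $\s(i)-i+jn<n+jn=(j+1)n\le -n<0$; in both subcases $\s(i)-i+jn<0$, so the absolute value equals $-(\s(i)-i+jn)$.

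There is no real obstacle here: the lemma is a routine corollary of Lemma~\ref{lem:close.image}, packaged in a form convenient for the next proof. The only mild subtlety is that one must use the \emph{strict} inequality $|\s(i)-i|<n$ to handle the borderline cases $j=\pm1$; this is exactly what Lemma~\ref{lem:close.image} supplies.
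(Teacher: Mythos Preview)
Your argument is correct and is essentially the same as the paper's: the paper also says the claim is immediate from Lemma~\ref{lem:close.image}, using that $-n<\s(i)-i<n$. Your write-up merely spells out the two cases $j>0$ and $j<0$ explicitly.
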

\begin{proof}
This is immediate from Lemma~\ref{lem:close.image}, since either $0<\s(i)-i<n$ or $-n<\s(i)-i<0$.
\end{proof}

\begin{lemma}\label{lem:nett.crossings}
If $\s$ is a balanced permutation then $\nu_i(\s)=i-\s(i)$.  Moreover, if $\s$ has winding number $k$ then $\nu_i(\s)=i-\s(i)+k$.
\end{lemma}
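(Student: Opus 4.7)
The plan is to reduce the winding-number-$k$ case to the balanced case, and then establish $\nu_i(\s)=i-\s(i)$ for balanced $\s$ by a direct computation using the balance condition.

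For the reduction, the semidirect-product description of $\tilde S_n^{ext}$ in Section~\ref{sec:background.affine} lets us write any extended affine permutation of winding number $k$ as $\s=\tau^k w$ with $w\in\tilde S_n$ balanced. Since $\tau^k$ is the shift $j\mapsto j+k$, we have $\s(j)=w(j)+k$ for every $j\in\Z$, so all differences $\s(j)-\s(i)=w(j)-w(i)$ are unchanged; hence $\kappa_{i,j}(\s)=\kappa_{i,j}(w)$ for every pair, and therefore $\nu_i(\s)=\nu_i(w)$. Granting the balanced case, this gives $\nu_i(\s)=i-w(i)=i-(\s(i)-k)=i-\s(i)+k$, as required.

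For the balanced case, the sum defining $\nu_i$ ranges over all $j\in\n$, including $j\le i$, so we must first fix the convention for $\kappa_{i,j}$ outside $\PP$. The geometric picture of Definition~\ref{d:crossing.number} forces $\kappa_{i,j}(\s)=-\kappa_{j,i}(\s)$ for $i\ne j$ (a single strand crossing contributes opposite signs to its two endpoints), with $\kappa_{i,i}(\s)=0$. Because $\s$ descends to a bijection of $\Z/n\Z$, we have $\s(j)\not\equiv\s(i)\pmod n$ whenever $i\ne j$ in $\n$, so the identity
\[
-\left\lfloor\frac{\s(i)-\s(j)}{n}\right\rfloor=\left\lfloor\frac{\s(j)-\s(i)}{n}\right\rfloor+1
\]
lets us rewrite each $j<i$ contribution; combining with the $j>i$ terms yields
\[
\nu_i(\s)=\sum_{j\in\n}\left\lfloor\frac{\s(j)-\s(i)}{n}\right\rfloor+(i-1).
\]

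To evaluate the remaining sum, write $\s(j)-\s(i)=n\lfloor(\s(j)-\s(i))/n\rfloor+r_j$ with $r_j\in\{0,1,\dots,n-1\}$. Because $\s$ descends to a bijection of $\Z/n\Z$, the residues $\{r_j\mid j\in\n\}$ form a complete residue system, so $\sum_j r_j=n(n-1)/2$. Combining this with the balance condition $\sum_j\s(j)=n(n+1)/2$ gives $\sum_j\lfloor(\s(j)-\s(i))/n\rfloor=1-\s(i)$, and substituting back yields $\nu_i(\s)=(1-\s(i))+(i-1)=i-\s(i)$. The one delicate point is the extension of $\kappa_{i,j}$ to $j<i$: a naive algebraic extension of the floor formula does \emph{not} match the geometric crossing count, and the $+1$ correction appearing in each of the $i-1$ terms with $j<i$ is exactly what converts the algebraically symmetric sum $1-\s(i)$ into the desired $i-\s(i)$.
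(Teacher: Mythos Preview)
Your proof is correct but takes a genuinely different route from the paper's. The paper argues geometrically: it multiplies $\s$ on the right by $\tau^{i-\s(i)}$ so that $i$ becomes a fixed point, observes that this does not change any crossings, and then identifies the nett crossing number of the (now vertical) strand $i\mapsto i$ with the winding number of $\s\tau^{i-\s(i)}$, which equals $i-\s(i)$ (or $i-\s(i)+k$ in the general case). Your argument instead evaluates $\sum_{j}\lfloor(\s(j)-\s(i))/n\rfloor$ directly from the balance condition together with the fact that the $\s(j)$ form a complete residue system modulo $n$, after carefully sorting out the antisymmetric extension of $\kappa_{i,j}$ to pairs with $j<i$. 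The paper's approach is shorter and more conceptual once one accepts the diagrammatic interpretation of the winding number; yours has the virtue of making explicit where the $(i-1)$ correction comes from and of not depending on that picture. One small quibble: with the paper's right-action convention, $\s=\tau^k w$ gives $\s(j)=w(j+k)$ rather than $w(j)+k$; what you want is the (equally valid) decomposition $\s=w\tau^k$, or simply to set $w(j):=\s(j)-k$ and check directly that this $w$ is balanced.
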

\begin{proof}
If $\s$ is balanced, the element $\s\tau^{i-\s(i)}$ of $\tilde S_n^{ext}$ sends $i\mapsto i$, and has winding number $i-\s(i)$.  A vertical line in general position just to the left or right of $i$ will have the same number of nett crossings as the strand $i\mapsto i$, namely the winding number $i-\s(i)$.  But the multiplication of $\s$ by $\tau^{i-\s(i)}$ does not change any crossings, and hence the claim follows.  When the winding number of $\s$ is $k\neq 0$, the winding number after multiplication by $\tau^{i-\s(i)}$ is $k+(i-\s(i))$ as required. 
\end{proof}

\begin{theorem}\label{thm:minimal.is.shortest}
If $s$ is a circular permutation and $\s$ is an affine permutation representing $s$, whose length is minimal among all representatives of $s$, then for each $i\in\n $, $\s$ takes the shortest distance between $i$ and the equivalence class $\s(i)\mod n$.
\end{theorem}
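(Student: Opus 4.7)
The plan is to bound $|\sigma(i) - i|$ directly, by comparing $\sigma$ with two carefully chosen extended affine lifts of $s$ and exploiting the crossing-number constraints from Section~\ref{sec:crossings}. Specifically, I would prove $|\sigma(i) - i| \le (n-1)/2$; since this is strictly less than $n/2$, every other element of the class $\sigma(i) + n\mathbb{Z}$ is at distance at least $(n+1)/2 > (n-1)/2$ from $i$, so $\sigma(i)$ is the unique closest representative and the theorem follows.

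The comparators are the two extended affine permutations obtained from $\sigma$ by replacing $\sigma(i)$ with $\sigma(i) \pm n$ (one of them is $\sigma^{\{i\}}$ from Lemma~\ref{l.add.subtract.n}; the other is its analogue for $-n$). Both still project to $s$, so by the minimality hypothesis their lengths are at least $\ell(\sigma)$. Using Corollary~\ref{c.crossing.numbers} to restrict every crossing of $\sigma$ to $\{-1, 0, +1\}$, Shi's formula (Theorem~\ref{t:shi.length}) lets me track the change in length pair by pair: a three-way case analysis on the value of $\kappa$ (for pairs with partner above or below $i$) collapses, via the partitions $a_{-1}+a_0+a_1 = n-i$ and $b_{-1}+b_0+b_1 = i-1$, to the clean expressions $(n-1) - 2X_i$ for the upward shift and $(n-1) - 2Y_i$ for the downward one. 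Here $X_i$ counts ``left-crossings'' at position $i$ within the window $\mathbf{n}$ (namely pairs with $\kappa_{i,j} = 1$ for $j > i$ or $\kappa_{j,i} = -1$ for $j < i$) and $Y_i$ the ``right-crossings''. Non-negativity of both length changes forces $X_i, Y_i \le (n-1)/2$.

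To finish, I would invoke Lemma~\ref{lem:nett.crossings}: since $\sigma$ is balanced, the nett crossing number equals $i - \sigma(i)$, and its geometric description as left-minus-right crossings gives $\nu_i(\sigma) = X_i - Y_i$. Hence $\sigma(i) - i = Y_i - X_i$, and with $X_i, Y_i \ge 0$ one concludes $|\sigma(i) - i| \le \max(X_i, Y_i) \le (n-1)/2$, which completes the proof.

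The main obstacle I anticipate is the sign-tracking in the pair-by-pair computation of the length change: there are six configurations around position $i$ (three values of $\kappa$, on each side), and getting them to reorganise so cleanly into $(n-1) - 2X_i$ and $(n-1) - 2Y_i$ is the delicate step. A related subtlety is that identifying $\nu_i(\sigma)$ with $X_i - Y_i$ requires the geometric (antisymmetric) reading of $\kappa_{i,j}$ when $i > j$, which differs from the literal floor-based Definition~\ref{d:crossing.number} by an offset; this is exactly the reading already used in the proof of Lemma~\ref{lem:nett.crossings}, so it is available for free.
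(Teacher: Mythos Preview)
Your proposal is correct, and the core idea matches the paper's: compare $\sigma$ with the shifted lifts obtained by moving $\sigma(i)$ to $\sigma(i)\pm n$, invoke minimality, and use Corollary~\ref{c.crossing.numbers} to control the crossing numbers. The execution, however, differs in a useful way.

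The paper argues geometrically: it first establishes the preliminary Lemmas~\ref{lem:close.image} and~\ref{lem:add.jn}, then for the hypothetical strand $i\mapsto\sigma(i)+jn$ it shows all crossings come from a single side (via Figure~\ref{fig:hypoth.crossings}), so that total crossings equal $|\text{nett crossings}|$; a chain of inequalities then gives $|\sigma(i)-i|<|\sigma(i)+jn-i|$ directly for every $j\neq 0$. Your route is purely algebraic: you compute the length change $\ell(\sigma^{\{i\}})-\ell(\sigma)$ term-by-term via Shi's formula, collapse the six cases to $(n-1)-2X_i$ and $(n-1)-2Y_i$, and obtain the sharper bound $|\sigma(i)-i|\le\lfloor(n-1)/2\rfloor<n/2$. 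This bypasses Lemmas~\ref{lem:close.image} and~\ref{lem:add.jn} entirely and makes the dependence on Corollary~\ref{c.crossing.numbers} completely explicit. The one point that needs care is exactly the one you flag: the identity $X_i-Y_i=i-\sigma(i)$ relies on the geometric (antisymmetric) interpretation of $\nu_i$ rather than the literal floor formula of Definition~\ref{d:crossing.number}; it is worth noting that this identity can also be verified by a direct computation from the balanced condition $\sum_j\sigma(j)=\sum_j j$, which would make your argument independent of any convention issue in Lemma~\ref{lem:nett.crossings}.
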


\begin{proof} 
We have a circular permutation $s$ and a balanced, minimal length lift, $\s\in\tilde S_n$.  Our claim is that each $\s(i)$ is the minimal distance from $i$ of all choices $\{\s(i)+jn\mid j\in\Z\}$.  

For an arbitrary $i$, we consider alternative choices of image from the set $\{\s(i)+jn\mid j\in\Z\setminus\{0\}\}$.  We claim that any of these alternatives increases the distance of the image from $i$. 

Given that $\s$ is balanced, a choice of $i\mapsto\s(i)+jn$ results in winding number $j$, and so the nett number of crossings of the line $i\mapsto\s(i)+jn$ is $i-(\s(i)+jn)+j$, by Lemma~\ref{lem:nett.crossings}.  If $j>0$, all crossings of this edge are from the right, as crossings from the left would cross the edge $i\mapsto\s(i)$ twice, violating minimality by Corollary~\ref{c.crossing.numbers} (see Figure~\ref{fig:hypoth.crossings}).  
Similarly, if $j<0$ all crossings are from the left, for the same reason.

\begin{figure}[ht]
\begin{center}
\includegraphics{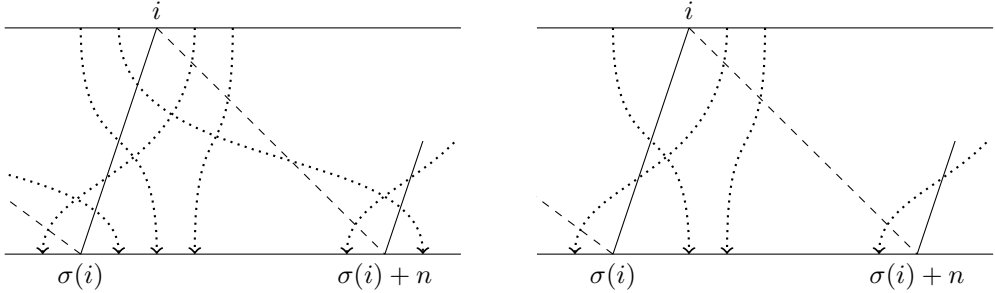}
\end{center}
\caption{\small The dotted arrows in the left panel indicate all possible sets of connections that cross the strand $i\mapsto\s(i)$ and a hypothetical alternative strand $i\mapsto\s(i)+n$ (shown dashed).  If $\s$ is minimal then no pair of equivalence classes of strands cross more than once, by Corollary~\ref{c.crossing.numbers}.  This means one of the sets of connections drawn on the left is empty and the only possible non-empty sets are shown on the right panel.  Note that the panel on the right shows crossings of the hypothetical alternative strand that are all from the same side (right to left).}
\label{fig:hypoth.crossings}
\end{figure}

It follows that the total number of crossings of the edge $i\mapsto\s(i)+jn$ equals the absolute value of the nett number of crossings, namely 
\begin{align*}
\text{total crossings of }(i\mapsto\s(i)+jn)
&=
\begin{cases}
-(i-(\s(i)+jn)+j)&\text{ if }j>0\\
i-(\s(i)+jn)+j&\text{ if }j<0.
\end{cases}\\
&=
\begin{cases}
(\s(i)-i)+jn-j&\text{ if }j>0\\
(i-\s(i))-jn+j&\text{ if }j<0.
\end{cases}
\end{align*}

For the original strand we have $i-\s(i)$ nett crossings and distance $|\s(i)-i|$.  So the total number of crossings of the strand is at least $|i-\s(i)|$. Therefore we have the inequalities:
\begin{align*}
|i-\s(i)|
&\le \text{total crossings of }(i\mapsto\s(i))\\
&\le \text{total crossings of }(i\mapsto\s(i)+jn)\text{ by minimality of $\s$,}\\
&=\begin{cases}
(\s(i)-i)+jn-j&\text{ if }j>0\\
(i-\s(i))-jn+j&\text{ if }j<0,
\end{cases}\\
&<\begin{cases}
(\s(i)-i)+jn&\text{ if }j>0\\
(i-\s(i))-jn&\text{ if }j<0,
\end{cases}\\
&=|\s(i)+jn-i|\quad\text{ by Lemma~\ref{lem:add.jn}}\\
&=\text{the distance from $i$ to $\s(i)+jn$}.
\end{align*}
In other words, if $\s$ is a balanced, minimum length representative of the circular permutation $s$, then each $\s(i)$ is the minimum distance from $i$ of all the alternatives $\{\s(i)+jn\mid j\in\Z\}$, as required.

\end{proof}

Note: the distance between $i$ and $\s(i)$ is strictly less than that between $i$ and $\s(i)+n$ when $\s$ is drawn with the winding number zero.  If $\s(i)+n$ is instead chosen as the image of $i$ the winding number becomes 1, as noted in the proof.  If the permutation is rebalanced (changing the frame of reference) then the bottom axis is moved one to the left, resulting in the distance $i$ to $\s(i)$ increasing by one and the distance $i$ to $\s(i)+n$ decreasing by 1.  Consequently, some choices $\s(i)$ or $\s(i)+n$ could be equivalent.  This occurs when $\s(i)=(n-1)/2$ ($n$ odd).  Here is the argument.

\begin{lemma}
If two choices $i\mapsto\s(i)$ and $i\mapsto\s(i)+n$ both result in minimal length elements, and the permutation is balanced with $i\mapsto\s(i)$, then $i-\s(i)=\frac{1}{2}(n-1)$.
\end{lemma}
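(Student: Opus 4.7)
Plan: Let $\s^{(i)}$ denote the extended affine permutation that agrees with $\s$ except that $\s^{(i)}(i)=\s(i)+n$; by hypothesis both $\s$ (balanced, winding number $0$) and $\s^{(i)}$ (winding number~$1$) are minimum-length representatives of the same circular permutation, so by Corollary~\ref{c.crossing.numbers} both have all crossing numbers in $\{-1,0,+1\}$. Partition the pairs involving $i$: let $a_+, a_0, a_-$ count the $j\in\n$ with $j>i$ having $\kappa_{i,j}(\s)$ equal to $+1,0,-1$ respectively, and let $b_+, b_0, b_-$ count the $k\in\n$ with $k<i$ having $\kappa_{k,i}(\s)$ equal to $+1,0,-1$ respectively, so that $a_++a_0+a_-=n-i$ and $b_++b_0+b_-=i-1$.

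Directly from Definition~\ref{d:crossing.number}, replacing $\s(i)$ by $\s(i)+n$ decreases $\kappa_{i,j}(\s)$ by $1$ for each $j>i$ and increases $\kappa_{k,i}(\s)$ by $1$ for each $k<i$. For the crossing numbers of $\s^{(i)}$ to remain in $\{-1,0,+1\}$, we must have $a_-=0$ and $b_+=0$. Applying Shi's formula (Theorem~\ref{t:shi.length}) to the hypothesised equality $\ell(\s^{(i)})=\ell(\s)$ and evaluating the pair-by-pair change in $|\kappa|$, we obtain
\[
   0 \;=\; \ell(\s^{(i)}) - \ell(\s) \;=\; (a_0 - a_+) + (b_0 - b_-),
\]
so $a_+ + b_- = a_0 + b_0$. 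Combined with $a_++a_0=n-i$ and $b_0+b_-=i-1$, summing yields $2(a_++b_-)=n-1$, whence $a_++b_-=(n-1)/2$ (forcing $n$ odd).

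It remains to identify $a_++b_-$ with $i-\s(i)$ via Lemma~\ref{lem:nett.crossings}. For each of the $a_+$ indices $j>i$ with $\kappa_{i,j}(\s)=+1$, the translated strand $(j-n)\mapsto(\s(j)-n)$ starts to the left of $i$ and ends strictly to the right of $\s(i)$, contributing one crossing of the strand $i\mapsto\s(i)$ from the left; for each of the $b_-$ indices $k<i$ with $\kappa_{k,i}(\s)=-1$, the strand $k\mapsto\s(k)$ itself starts to the left of $i$ and ends strictly to the right of $\s(i)$, again contributing a crossing from the left. All other pair-classes contribute zero nett crossings of strand $i\mapsto\s(i)$: for $a_0$ and $b_0$ pairs both endpoints of the relevant periodic translate lie on the same side of that strand, while $a_-$ and $b_+$ are empty. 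Hence $\nu_i(\s)=a_++b_-$, and Lemma~\ref{lem:nett.crossings} yields $i-\s(i)=\nu_i(\s)=(n-1)/2$. The main obstacle is the geometric bookkeeping in this final step, since it requires correctly identifying which periodic translate of each class actually crosses strand $i\mapsto\s(i)$ and with what sign; once the cases are enumerated using the ranges of $\s(j)-\s(i)$ and $\s(k)-\s(i)$ dictated by $I_{\pm1}(\s)$ and $I_0(\s)$, precisely the $a_+$ and $b_-$ pairs each supply a single $+1$.
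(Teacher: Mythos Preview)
Your argument is correct. The case analysis is sound: replacing $\s(i)$ by $\s(i)+n$ shifts $\kappa_{i,j}$ down by one for $j>i$ and $\kappa_{k,i}$ up by one for $k<i$, so Corollary~\ref{c.crossing.numbers} applied to $\s^{(i)}$ forces $a_-=b_+=0$; the length-equality then gives $a_++b_-=a_0+b_0$, and together with $a_++a_0=n-i$, $b_0+b_-=i-1$ this yields $a_++b_-=(n-1)/2$. The final identification $\nu_i(\s)=a_++b_-$ also checks out once one verifies (as you indicate) that for each $a_+$-pair only the translate $(j-n)\mapsto(\s(j)-n)$ crosses strand $i$, for each $b_-$-pair only $k\mapsto\s(k)$ itself crosses, and in both cases the crossing is from the left, while the $a_0$ and $b_0$ classes contribute nothing.

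The paper's proof is organised differently and is shorter, because it recycles a formula already obtained inside the proof of Theorem~\ref{thm:minimal.is.shortest}. There it was shown that if $\s$ is minimal then the total number of crossings of the hypothetical strand $i\mapsto\s(i)+n$ equals $\s(i)+n-i-1$; by the symmetric argument with the roles of $\s$ and $\s^{(i)}$ exchanged, minimality of $\s^{(i)}$ gives that the total number of crossings of the strand $i\mapsto\s(i)$ equals $i-\s(i)$. Since changing only the image of $i$ leaves every other pair's crossing number untouched, equality of lengths forces these two strand-crossing counts to coincide, and one reads off $i-\s(i)=(n-1)/2$ in one line. Your approach rederives the same equality by explicit bookkeeping with Shi's formula rather than by quoting the crossing-count computation from the earlier proof; it is more self-contained, at the cost of the extra case enumeration.
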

\begin{proof}
As in the proof of Theorem~\ref{thm:minimal.is.shortest}, if $i\mapsto\s(i)$ gives a minimal length element then the number of crossings of a line $i\mapsto\s(i)+n$ is $\s(i)+n-i-1$.  Similarly, if $i\mapsto\s(i)+n$ also gives a minimal length element then the number of crossings of a line $i\mapsto\s(i)$ is $i-\s(i)$.  Since both choices have the same total number of crossings and all else remains fixed, these lines must have the same number of crossings, namely $\s(i)+n-i-1=i-\s(i)$.  The lemma follows.
\end{proof}

Note that in the above lemma, and as mentioned prior to it, the permutation is balanced with $i\mapsto\s(i)$ but not balanced with $i\mapsto\s(i)+n$.  So the distances when balanced are not the same thing as the number of crossings.  The above scenario arises when $n$ is odd, so for a given frame of reference there is only one choice: the distances $i$ to $\s(i)$ and $i$ to $\s(i)+n$ are different.

The results in this section show that, for each frame of reference, a minimal representative may be found by choosing shortest distances for each image.  Taking the shortest representative over all frames of reference will yield the minimal number of inversions required for the given circular permutation.

\section{Implementation and Application}\label{s:applications}

In this section we explain how the results may be implemented algorithmically to compute the inversion length in the two-region inversion model, and then apply the method to some published \emph{Yersinia pestis} genomes.  

\subsection{Computational implementation}

The method arising from these results breaks into three natural algorithmic components: 
\begin{enumerate}
\item minimizing paths for the lifting process, 
\item calculating the length for an affine permutation, and
\item sorting a circular permutation (finding an explicit sequence of inversions).
\end{enumerate}

 For lifting a circular permutation into an extended affine permutation we have to make a decision which way to route each path, i.e.\ choosing minimal-distance images for each $i\in \n$.  A straight-forward method checks for each $i\in\n$ whether the image in the previous or in the next window has a shorter distance or not (Alg.\ \ref{alg:pathminim}).

For calculating the length of an affine permutation we simply count the number of crossings. The sum in Equation \eqref{eq:shilength} can be translated into a {\texttt{for}} loop easily. Then for calculating the circular length we have to go through all frames of reference to find the minimal length (Alg.\ \ref{alg:length}).  

Additionally, we can \emph{sort} the permutation, producing a geodesic.
The sorting algorithm (Alg. \ref{alg:sort}) operates by comparing consecutive pairs and swaps them if needed, hence doing uncrossings. This algorithm always chooses the lowest index pair to be swapped and thus produces a single geodesic. However, a systematic exploration of all possible swaps (e.g.\ a backtrack algorithm) can enumerate geodesics.   

Since we combine quadratic and linear algorithms, the overall sorting algorithm is also polynomial. In particular we can easily deal with real-world genome data with approximately 80 regions. The algorithms in this subsection were implemented using the \textsf{GAP} \cite{gap4} computer algebra system and the source code is available upon request.

\begin{algorithm}[ht!]
\SetKwInOut{Input}{input}\SetKwInOut{Output}{output}
\Input{$n$ number of points\\
  $i$ a point\\
  $\sigma(i)$ the image of $i$ under $\sigma$}
\Output{$\sigma'(i)$ the minimized image}
\SetKwFunction{MinimizePath}{MinimizePath}
\SetKwInOut{Name}{\MinimizePath($n,i,\sigma(i)$)}
\BlankLine
\Name{}
$d\leftarrow  |i-\sigma(i)|$\;
\lIf{$|i-(\sigma(i)+n)|<d$}{return $\sigma(i)+n$\;}
\lIf{$|i-(\sigma(i)-n)|<d$}{return $\sigma(i)-n$\;}
return $\sigma(i)$\;
\caption{Minimizing the path between a point and its image.}
\label{alg:pathminim}
\end{algorithm}

\begin{algorithm}[ht!]
\SetKwInOut{Input}{input}\SetKwInOut{Output}{output}
\Input{$\s$ circular permutation}
\Output{the 2-inversion length of $\s$}
\SetKwFunction{CTIL}{CircularTwoInversionLength}
\SetKwInOut{Name}{\CTIL($\s$)}
\SetKwData{Min}{min}
\SetKwFunction{Size}{Size}
\BlankLine
\Name{}
$n\leftarrow$ \Size($\s$);
$\Min\leftarrow \ell(\s)$\;
\ForEach{$\s'\in \s\cdot D_n$}{
  $\s'_m\leftarrow$ all paths minimized in $\s'$\;
  \If{$\ell(\s'_m)<$\Min}{
    \Min$\leftarrow\ell(\s'_m)$\;
  }
}
return $\Min$\;
\caption{Calculating the 2-inversion length of a circular permutation.}
\label{alg:length}
\end{algorithm}

\begin{algorithm}[ht!]
\SetKwInOut{Input}{input}\SetKwInOut{Output}{output}
\Input{$m$ list of images of minimized configuration}
\Output{$w$ a word encoding the sequence of 2-inversions for sorting $m$
}
\SetKwData{Finished}{finished}
\SetKwFunction{Swap}{Swap}
\SetKwFunction{Size}{Size}
\SetKwFunction{Add}{Add}
\SetKwFunction{MinimizePath}{MinimizePath}
\SetKwFunction{Sort}{Sort}
\SetKwInOut{Name}{\Sort($m$)}
\BlankLine
\Name{}
$n\leftarrow$ \Size($m$);$w\leftarrow$ []\;
\ForEach{$i\in \n $}{$d[i]\leftarrow m[i]-i$\;}
\Repeat{\Finished}{
  \Finished$\leftarrow$ true\;
  \ForEach{$i\in\n$}{
    $j\leftarrow(i+1)\mod n$\;
    \If{$d[i]>d[j]$}{
      \Swap($m[i],m[j]$)\;
      \Add($w,i$)\;
      \ForEach{$k\in\{i,j\}$}{
        $d[k]\leftarrow$ \MinimizePath($n, k,m[k]$)-$k$\;
      }
      \Finished$\leftarrow$ false\;
      break\;
    }
  }
}
return $w$\;
\caption{Sorting by uncrossings. The algorithm returns a geodesic between $m$ and $m'$, where $m'$ is sorted in the sense that all points have their corresponding neighbours, but the whole configuration may be rotated and/or flipped by a dihedral action.}
\label{alg:sort}
\end{algorithm}

\subsection{Application to \emph{Yersinia} genomes}

We apply the method summarised in Algorithm~\ref{alg:length} to calculate inversion-based distances among eight \emph{Yersinia} genomes. The input data are in the form of a set of permutations of regions that are conserved across all genomes. We obtained these permutations from~\citet{Darling2008} by using the Mauve software package~\cite{Darling2010progressivemauve}. The resulting matrix of minimal inversion distances is given in Table~\ref{tab:matrix}. 
This matrix of distances can be used to generate a phylogenetic tree using distance-based methods such as neighbour-joining~\cite{Saitou1987neighbour}. We applied this method using the phylip package~\cite{Felsenstein1989phylip}. The two genomes of {\it Yersinia pseudotuberculosis} can be used as outgroups, as done for example by~\citet{bos2011draft}.  The resulting phylogeny is shown in Figure~\ref{fig:tree}.

\begin{table}
  \caption{Matrix of minimal inversion distances among \emph{Yersinia} genomes calculated by Algorithm~\ref{alg:length}. \label{tab:matrix}}
\begin{center}
\begin{tabular}{lrrrrrrrr}
           &  \begin{sideways}KIM\end{sideways}  &\begin{sideways}ANTIQUA\end{sideways}  & \begin{sideways}MICROTUS\end{sideways}&\begin{sideways}CO92\end{sideways} &\begin{sideways}NEPAL516\end{sideways}  &\begin{sideways}PESTOIDES\end{sideways}  &\begin{sideways}Yp\_IP31758\end{sideways}  &\begin{sideways}Yp\_IP32953\end{sideways}  \\
\hline
  KIM         &  0  &233  & 738&188  &334  &515  &758  &738  \\
  ANTIQUA     &233  &  0  & 750&319  &449  &664  &719  &712   \\
  MICROTUS    &738  &750  & 0&745  &659  &809  &695  &706   \\
  CO92        &188  &319  & 745 &  0  &366  &595  &697  &760   \\
  NEPAL516    &334  &449  & 659&366  &  0  &659  &641  &759   \\
  PESTOIDES   &515  &664  & 809&595  &659  &  0  &753  &695   \\
  Yp\_IP31758 &758  &719  & 695&697  &641  &753  &  0  &589   \\
  Yp\_IP32953 &738  &712  & 706&760  &759  &695  &589  &  0   \\ \hline
\end{tabular}

\end{center}

\end{table}

This evolutionary reconstruction can be compared to the results of~\citet{Darling2008}, who also used inversion information. While Darling et al.\ used a network visualisation of the relationships among the genomes, it is possible to see the similarities with our phylogeny. 
Namely, \emph{Y. pestis}~Pestoides and \emph{Y. pestis}~Microtus 91001 join near the root, the remaining  \emph{Y. pestis} isolates group together, and the two {\it Yersinia pseudotuberculosis} outgroups also group together. 
Another point of comparison is the phylogeny based on 1,694 variable positions across the whole genome~\cite{bos2011draft}.  Again, Pestoides and Microtus 91001 diverged early while the remaining genomes evolved more recently. Bos et al distinguish two clades that arose since the Black Death: one with Nepal516 and KIM and the other with CO92 and Antiqua. Although there are slight differences within these recent clades, our methods produced a tree that is broadly consistent with the tree of~\citet{bos2011draft} which uses sequence variation -- a completely different source of information.

\begin{figure}[ht!]
\begin{center}
\includegraphics[width=.6\textwidth]{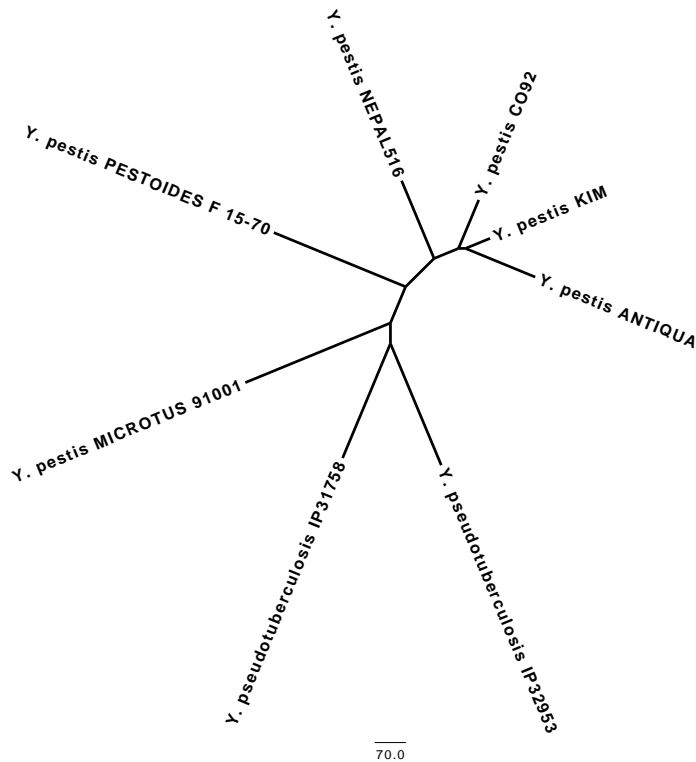}
\end{center}
\caption{Phylogeny from data published in~\citet{Darling2008}, based on distances obtained by applying Algorithm~\ref{alg:length}. \label{fig:tree}}
\end{figure}

\section{A general modelling framework}

We have studied a model in which only certain inversions are permitted, specifically those of two adjacent regions.  As remarked in Section~\ref{s:systems}, this is but one example of an \emph{inversion system}, in which the set of inversions $\inv$ is constrained in some way.  In this setting, we define a metric $\ell$ on the group relative to $\inv$ and according to parsimony, so that $\ell(g)$ is the word length of $g$ in the generators $\inv$.  Then  
the distance between genomes $G_1$ and $G_2$, represented by group elements $g_1$ and $g_2$ in the model $(G,\inv)$ with the metric $\ell$ is simply $\ell(g_1^{-1}g_2)$.  The model in which all inversions are permitted, the \emph{uniform model}, simply removes constraints on $\inv$ completely.

A more realistic model than either the uniform or a constrained model is one in which inversions may be given different weights according to experimental data, such as the frequencies of different inversion lengths.  Estimates of these frequencies have been obtained for instance in~\citet{Darling2008} (see Figure~7 of that paper).  In the group-theoretic setting, we have the same group and the same generators as in the uniform model, since all inversions are permitted.  The variation in the frequencies of different inversions is accounted for by manipulating the \emph{length function} in the group, for instance by assigning weights to inversions depending on the number of regions involved~\cite{swidan2004sorting}.  In the uniform and constrained models, and in most combinatorial group theory, each generator is defined to have length 1, and the length of a product of generators is the sum of the lengths.  However, length is used as a proxy for evolutionary distance, and if inversions are not equally probable then their length should be different.  In the light of the parsimony assumption that the most likely evolutionary path is one of minimal distance, the weighting (or length) of a single inversion needs to be adjusted to account for the difference in probability.  

For example, let $\inv$ be the set of all possible inversions, and suppose $\omega:\inv\to\R^{\ge 0}$ is a weight function assigned to the inversions.  If we assume short inversions are more probable than longer inversions, we may have a weight function that is order-preserving with respect to inversion length (if $s$ is longer than $t$ then $\omega(s)>\omega(t)$).  Then our metric on the group may be defined by first defining length additively on any word in the generators, setting $\ell(s_{i_1}\dots s_{i_k})=\omega(s_{i_1})+\dots+\omega(s_{i_k})$.  Then to define the length of a group element $g\in G$ one needs to take the minimum over all words $w$ in the generators $\inv$ representing $g$: $\ell(g)=\min\{\ell(w)\mid w=g\}$.  This establishes the minimal weight path in the Cayley graph from the identity arrangement to $g$. In practice this is a significant problem, however, as there are infinitely many words representing $g$.  Even eliminating paths that double back on themselves the search space is potentially very large.  Applying this model in generality will require some clever new ideas or a statistical approach (some computational approaches have been taken in~\citet{pinter2002sorting,swidan2004sorting}).

While the general version of this model seems difficult to work with, special cases are clearly not intractable, as we have shown in this paper.  The two-region inversion model we study simply employs a special weight function in which $\omega(s)=1$ if $s$ is an inversion of two regions and 0 otherwise.  Indeed, any model that restricts the set of inversions $\inv$ but treats them all as equivalent follows a similar pattern, setting $\omega(s)=1$ if $s\in\inv$ and 0 otherwise.  This also applies to models in which inversions are not restricted according to length but by location, such as models only permitting inversions that do not move the terminus of replication, or even that are symmetric about the terminus~\cite{ohlebusch2005median}.   Similarly the uniform model generally studied has the even simpler weight function $\omega(s)=1$ for all inversions $s$.

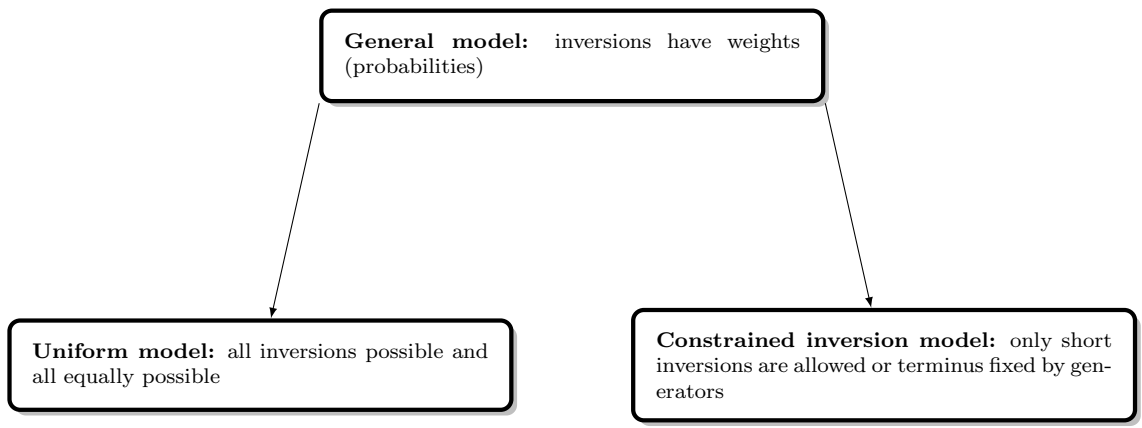
\begin{figure}[ht!]
\begin{center}
\begin{tikzpicture}
\tikzstyle{rec} = [rectangle,rounded corners,ultra thick,draw=black,fill=white,node distance=5.8cm] 

\node[rec,inner sep=3mm,drop shadow] (prob)
	{\parbox{0.4\textwidth}{\footnotesize\textbf{General model:} inversions have weights (probabilities)}};

\node[rec,inner sep=3mm,drop shadow,below left of=prob] (curr)
	{\parbox{0.4\textwidth}{\footnotesize\textbf{Uniform model:} all inversions possible and all equally possible}};

\node[rec,inner sep=3mm,drop shadow,below right of=prob] (fix)
	{\parbox{0.4\textwidth}{\footnotesize\textbf{Constrained inversion model:} only short inversions are allowed or terminus fixed by generators}};

\draw[->,>=latex]  (prob.south west) -- (curr);
\draw[->,>=latex]  (prob.south east) -- (fix);

\end{tikzpicture}
\end{center}

\caption{Different biological models. The widely used uniform model and any constrained model are special cases of the general model, in which all inversions are assigned a weight value. The weights can be interpreted as probabilities  the inversions to occur.}
\label{fig:models}
\end{figure}

\section{Discussion}

In this article, we have introduced a group theoretic framework for modelling the process of bacterial genome rearrangement due to inversions. Using this framework, we outlined a range of alternative models. We focused on a specific model in which inversions act locally on two genomic regions at a time.  Based on this model, the group theoretic framework has enabled us to derive a new algorithm to obtain the minimum number of inversion events connecting two genomes under comparison.  The key conceptual step has been to find a way to lift circular permutations to the affine symmetric group in such a way that the inversion distance on the circular genome can be found using results on length of elements in the affine symmetric group.  

The combinatorial group theory of permutation groups has a long history of development and therefore presents a potent opportunity to examine in a new light the processes underlying bacterial genome evolution. 
There is potential to introduce more realism into models of genome evolution by generalising the model studied here within the group
theoretic framework.
This represents an important advance over existing methods of comparative genomics based on fairly coarse assumptions, most particularly the assumption that all inversion events are equally probable.  
In addition, while the questions in group theory that arise in the algebraic models in this paper are new, they are related to other questions under ongoing consideration by mathematicians. It is to be hoped that this connection between evolutionary biology and algebra will drive further theoretical development of related group theory.

Our approach extends preceding studies that applied group theory to genome evolution. While the innovative study by~\citet{Watterson-chrom-reversal-1982} described the inversion distance problem, its interpretation as a problem in group theory was first noted by~\citet{kececioglu1993exact} a decade later (and followed by~\citet{meidanis2000alternative}).  These models, along with most other approaches to the problem, assume a uniform distribution of inversion lengths; something we have addressed in this paper by allowing only inversions of two adjacent regions.  A wider issue is that of whether the minimal length is the best measure of evolutionary distance at all, given evolution may not have taken a shortest path to the observed arrangements, regardless of the metric used to define minimal.
Recently,~\citet{moulton2011butterfly} pursued this challenge to parsimony, using group theoretic principles to consider the effect on length of a small change to an inversion sequence, and obtained ``worst-case'' bounds on the difference between lengths of elements when an additional generator is used.  In general, it is clear that the application of group-theoretic methods to genomics problems holds great promise for a fertile exchange between algebraists and evolutionary biologists.

\bibliographystyle{apalike}
\bibliography{../inversions}

\end{document}